\documentclass[11pt]{amsart}
\usepackage[latin1]{inputenc} 
\usepackage{listings, tcolorbox}

\usepackage{amssymb, amsmath, amsthm}
\usepackage[colorlinks=true,linkcolor=blue,citecolor=red]{hyperref}
\usepackage{color}
\usepackage{graphics}
\usepackage{subfigure}
\usepackage{graphicx}  
\usepackage{float}  
\usepackage{epsfig}
\usepackage{amssymb}
\usepackage{epstopdf}
\usepackage{enumerate}
\usepackage{appendix}
\usepackage{amscd}
\usepackage{mathrsfs}
\usepackage{tikz}
\usepackage{bm}
\usepackage[T1]{fontenc}

\numberwithin{equation}{section}
\newtheorem{theorem}{Theorem}[section]

\newtheorem{lemma}[theorem]{Lemma}
\newtheorem{proposition}[theorem]{Proposition}

\theoremstyle{definition}

\newtheorem{remark}[theorem]{Remark}
\newtheorem{problem}{Problem}

\newenvironment{Acknowledgment}
{\begin{trivlist}\item[]\textbf{Acknowledgments }}{\end{trivlist}}

\subjclass{37K10, 35C05, 35Q51}
\keywords{Integrable system, Benjamin-Ono equation, Cauchy problem, Soliton resolution conjecture, Spectral theory.}
\begin{document}
	
	\title[Soliton resolution conjecture for the BO equation]{Soliton resolution conjecture for the Benjamin-Ono equation: Explicit $L^\infty$ asymptotic error formula}
	
\author{Hong-Yu Pan}
\address{\textnormal{Hong-Yu Pan} \newline \indent School of Mathematics, China University of Mining and Technology, Xuzhou 221116, China}
\email{hypan@cumt.edu.cn}
\author{Shou-Fu Tian$^{*,\dag}$}
\address{\textnormal{Shou-Fu Tian} (Corresponding author) \newline \indent School of Mathematics, China University of Mining and Technology, Xuzhou 221116, China}
\email{sftian@cumt.edu.cn}
\thanks{$^{*}$Corresponding authors(sftian@cumt.edu.cn, shoufu2006@126.com (S.F. Tian)).\\
\hspace*{3ex}$^\dag$This author is contributed equally as the first author.}
	
	\begin{abstract}
		{We prove the soliton resolution conjecture for the Benjamin-Ono (BO) equation with an explicit error bound in the $L^\infty$-norm. For the finite-order multisoliton case, the explicit $L^\infty$-norm errors are bounded by $\mathcal{O}(|t|^{-\frac{1}{4}(1-\frac{1}{2s})})$ with initial data $u_0 \in H^{s,\alpha}(\mathbb{R})$ for any $s>1/2$ and $\alpha \geqslant 1$. For the infinite-order multisoliton case, the explicit $L^\infty$-norm errors are bounded by  $\mathcal{O}(|t|^{-1/3})$ when $u_0$ is expressed as an infinite sum of soliton profiles. Recently, Gassot, G\'erard, and Miller (arXiv:2601.10488, 2026) proved an implicit error bound in $H^1$-norm of the soliton resolution in the finite-order multisoliton case with $u_0 \in H^{1,1}\left( \mathbb{R} \right)$, requiring extra condition $x^2u_0(x) = c_0 + v_0(x), c_0\in \mathbb{R}, v_0(x) \in L^2(\mathbb{R})$. In the infinite-order multisoliton case, Gassot and G\'erard (arXiv:2603.15419, 2026) proved an implicit error bound in $L^\infty$-norm for the soliton resolution when $u_0$ is expressed as an infinite sum of soliton profiles. Notably, they highlighted the inverse spectral problem for the Lax operators as an interesting open problem. In order to address the soliton resolution with the explicit error in $L^\infty$-norm for finite/infinite-order multisoliton, there exist many open problems concerning initial conditions, error accuracy, and other related issues. Solving these open problems is the central objective of our work. In order to enlarge the initial data space and remove the extra conditions, we employ Kato-Rellich theorem to transform the soliton resolution conjecture into an error estimation problem between the sequence and the solution. It is worth noting that we solve the open inverse spectral problem for the Lax operator by constructing a trace-class operator based on the discrete spectrum.}
	\end{abstract}

\maketitle
\tableofcontents

\section{Introduction}
In this work, we consider the Cauchy problem for the Benjamin-Ono (BO) equation
\begin{equation}\label{problem}
	\left\{
	\begin{array}{l}
		\partial_t u - \partial_x \lvert D_x \rvert u + \partial_x (u^2) = 0, \\
		u(0,x) = u_0(x),
	\end{array}
	\right.
	\quad (t,x) \in \mathbb{R}^2,
\end{equation}
where the real-valued function $u(t,x)$ denotes the potential, $$\left| D_x \right|u\left( x \right) :=\frac{1}{\pi}\mathrm{P}.\mathrm{V}.\int_{\mathbb{R}}^{}{\frac{u^{\prime}\left( y \right)}{x-y}dy},$$and
\begin{equation*}
	\mathrm{P.V.} \int_{\mathbb{R}} \frac{f(y)}{x-y} dy := \lim_{\epsilon \to 0^+} \int_{|x-y| > \epsilon} \frac{f(y)}{x-y} dy.
\end{equation*}
Benjamin \cite{Ben67} (see also Davis and Acrivos \cite{DA67}) derived the BO equation, which describes the weakly nonlinear propagation of internal gravity waves along the interface of a stratified two-layer fluid of great total depth. The potential function $u$ measures the interface's elevation relative to equilibrium. For a rigorous and comprehensive derivation of the BO equation, we refer readers to the monograph by Saut and Klein \cite{KS21}.

Investigating the functional spaces in which the BO equation admits a unique solution has been an important problem ever since its derivation. Saut was the first to prove the global well-posedness for the BO equation in the space $H^s(\mathbb{R})$ with $s \geqslant 2$ \cite{Sau79}. Following this seminal work, numerous studies investigated the Cauchy problem for the BO equation in lower regularity spaces \cite{Ger26, IT19, IK07, KS21, M08, MP12, T04}. Most notably, G\'erard, Kappeler, and Topalov \cite{GKT23} achieved a milestone result for the BO equation on the torus $\mathbb{T}$. Via the construction of Birkhoff coordinates, they proved sharp global well-posedness for $s > -1/2$ and ill-posedness at $s = -1/2$, which additionally revealed remarkable qualitative dynamics. Based on the groundbreaking work \cite{GKT23}, as well as the explicit formula for the BO equation derived by G\'erard \cite{Ge23}, Killip, Laurens, and Vi\c{s}an \cite{KLV24} established global well-posedness on both the real line and the torus for initial data in $H^s$ with $ s > -1/2$.

Following the achievement of well-posedness results for the BO equation, the next natural and  important topic is to explore the long-time asymptotics of its solutions. Employing the celebrated explicit formula proposed by G\'erard \cite{Ge23}, G\'erard and his collaborators systematically analyzed the long-time dynamics of the BO equation. For example, Blackstone, Gassot, G\'erard, and Miller reduced the operator resolvent within the formula into finite-dimensional determinants and addressed the long-time dynamics for the BO equation with rational initial data in detail \cite{BGGM24, BGGM25}. Moreover, for a class of initial data for which the number of solitons is infinite, Gassot and G\'erard investigated the long-time asymptotic behavior of the BO equation on the real line \cite{GG26}. \textit{Notably, in the investigation of long-time dynamics, the soliton resolution conjecture is a central and fundamental open problem.}

The investigation of the soliton resolution conjecture originated from the long-time asymptotic phenomenon of numerical solutions observed by Zabusky and Kruskal in the Korteweg-de Vries (KdV) equation \cite{ZK65}. This phenomenon profoundly reveals the dynamical behavior of nonlinear wave equations, and since then has continuously attracted widespread attention from numerous prominent scholars, including Tao \cite{T06} and Kenig \cite{DKM23}. In 2008, Tao \cite{T09} pointed out the possibility of establishing the soliton resolution conjecture and explicitly stated:

\vspace{\baselineskip}
``\textit{More generally, if one starts with arbitrary (but smooth and decaying) initial data, what usually happens (numerically, at least) with evolutions of equations such as $u_t+u_{xxx}+6uu_x=0$ is that some non-linear (and chaotic-seeming) behaviour happens for a while, but eventually most of the solution radiates away to infinity and a finite number of solitons emerge, moving away from each other at different rates.}''
\vspace{\baselineskip}

\textit{It is widely believed that the \textbf{soliton resolution conjecture} asserts that for sufficiently large times, the solution of a nonlinear dispersive equation can be decomposed into a finite number of solitons and a radiation term.} For the vast majority of dispersive equations, this conjecture has long been considered an open problem. In 1993, Deift and Zhou investigated the asymptotic properties of solutions to the modified Korteweg-de Vries equation with decaying initial data \cite{DZ93}. Subsequently, based on the seminal work of Deift and Zhou, the soliton resolution conjecture was rigorously established for a broad class of classical integrable systems, including the celebrated KdV equation and nonlinear Schr\"{o}dinger equation equation \cite{BP08, CLW23, CL21, CJ16, DP11, LTY22a, LTY22b, TT26, YTL26}. The second author Tian and his collaborators have also established results concerning the soliton resolution conjecture and long-time asymptotics for the Wadati-Konno-Ichikawa equation \cite{LTY22a, LTY22b}, the spin-1 Gross-Pitaevskii equation \cite{TT26}, and the modified Camassa-Holm equation \cite{YTL26}.

However, the soliton resolution conjecture for some integrable equations with a nonlocal Hilbert transform remains an open problem. Recently, Gassot, G\'erard, and Miller \cite{GGM26} proved the following result in the finite-order multisoliton case:
\begin{proposition}[Gassot, G\'erard, and Miller \cite{GGM26}]\label{GGM26}
Let \({u}_{0} \in  {H}^{1}\left( \mathbb{R}\right)\) be a real-valued function such that \(x{u}_{0} \in  {H}^{1}\left( \mathbb{R}\right)\) and
\begin{equation}\label{tiaojian}
	x^2u_0\left( x \right) =c_0+v_0\left( x \right) ,\enspace c_0\in \mathbb{R} ,v_0\left(x \right) \in L^2(\mathbb{R} ).
\end{equation}
Then there exist a nonnegative integer \(N\) , elements \({p}_{1},\ldots ,{p}_{N}\) of \({\mathbb{C}}_{ + }\) , with \(\operatorname{Im}\left( {p}_{1}\right)  < \cdots  < \operatorname{Im}{p}_{N}\) , and real-valued functions \({u}_{\infty }^{ \pm  }\) in \({H}^{1}\left( \mathbb{R}\right)\) such that
$$
\left\| r\left( t,\cdot \right) \right\| _{H^1\left( \mathbb{R} \right)}:=\left\| u\left( t,\cdot \right) -\sum_{j=1}^N{R_{p_j}\left( \cdot -\frac{1}{\mathrm{Im}p_j}t \right)}-\mathrm{e}^{t\partial _x\left| D \right|}u_{\infty}^{\pm} \right\| _{H^1\left( \mathbb{R} \right)}\rightarrow 0,\enspace t\rightarrow \infty,
$$
where
\begin{equation}\label{rp}
	R_p\left( x-\frac{1}{\mathrm{Im}p}t \right) =\frac{2\mathrm{Im}p}{\left| x-\frac{1}{\mathrm{Im}p}t+p \right|^2}.
\end{equation}
\end{proposition}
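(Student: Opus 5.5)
Our plan follows the Lax-pair / explicit-formula route of Gérard. Write the Lax operator on the Hardy space $L^2_+(\mathbb{R})$ as $L_{u}=D-T_{u}$, where $D=-i\partial_x$ and $T_u$ is the Toeplitz operator $f\mapsto\Pi(uf)$, with $\Pi$ the Szeg\H{o} projector. Gérard's explicit formula gives
\[
\Pi u(t,z)=\frac{1}{2i\pi}I_+\big[(G+2tL_{u_0}-z)^{-1}\Pi u_0\big],
\]
with $G$ the adjoint of multiplication by $x$. The hypothesis $xu_0\in H^1$ makes $G$ act on $\Pi u_0$. The extra condition \eqref{tiaojian} is what allows one further application of $G$, so that $I_+$ and the pairing $\langle x\,\cdot,\cdot\rangle$ are well defined.

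\textbf{Spectral decomposition.} Since $u_0\in L^2\cap L^\infty$ decays, $T_{u_0}$ is relatively compact with respect to $D$. Hence $L_{u_0}$ has essential spectrum $[0,\infty)$, finitely many negative eigenvalues $\lambda_1<\dots<\lambda_N<0$, and no embedded eigenvalues; this is Wu's theorem, and it uses $xu_0\in L^2$. We split $L^2_+=E_d\oplus E_{ac}$ into the span of the eigenfunctions $\varphi_j$ and the absolutely continuous part. One checks that $\varphi_j$ is of the form $(x+p_j)^{-1}$ plus lower order. The identity $\langle u_0,\varphi_j\rangle$ together with $G\varphi_j$ fixes $p_j$, with $\mathrm{Im}\,p_j=-1/\lambda_j$ up to the normalization of the equation, so the velocities $1/\mathrm{Im}\,p_j$ are distinct and ordered.

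\textbf{Finite-dimensional block.} Restrict the flow to the block where $L_{u_0}$ acts as $\mathrm{diag}(\lambda_j)$ and $G$ has finite matrix entries $\langle G\varphi_j,\varphi_k\rangle$. The compressed resolvent $(G_d+2tL_d-z)^{-1}$ is then a rational function. For $|t|\to\infty$, with $z$ near $x=t/\mathrm{Im}\,p_j$, the diagonal dominates and off-diagonal couplings become $O(1/t)$. This produces exactly the profiles $R_{p_j}\big(x-t/\mathrm{Im}\,p_j\big)$ from \eqref{rp}, with $H^1$ errors going to $0$.

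\textbf{Continuous part.} On $E_{ac}$ we use the scattering (wave-operator) structure. The flow $e^{-itL^2}$ intertwined by the wave operators $W_\pm=\mathrm{s\text{-}lim}\,e^{itL_{u_0}^2}e^{-itD^2}$, which exist and are complete because $T_{u_0}$ is short range, transforms the continuous component into a free linear BO evolution $e^{t\partial_x|D|}u_\infty^\pm$. Here $u^\pm_\infty$ is defined as $2\,\mathrm{Re}$ of the image of the projected data, and it lies in $H^1$ because $u_0\in H^{1}$ and $L$-regularity gives $D^{1}$-control via Kato--Rellich.

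\textbf{Gluing and main obstacle.} Finally, the cross terms between $E_d$ and $E_{ac}$ in the resolvent decay. This follows from the Riemann--Lebesgue lemma for the spectral measure, together with local decay of $e^{-itL}$ on $E_{ac}$. We expect this decoupling to be the main obstacle. Because $G$ does not commute with $L$, the blocks are not invariant, and one needs a Schur-complement argument controlling $\langle (G+2tL-z)^{-1}\rangle$ uniformly in $z$ on the real line. Condition \eqref{tiaojian} is used precisely here: it guarantees $G\varphi_j\in L^2_+$ and a finite $I_+$ pairing, so the Schur complement is a bounded perturbation of order $o(1)$. With that in hand, the $H^1$ convergence follows by energy conservation, since $\|u\|_{H^1}$ is controlled by conserved quantities. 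This upgrades weak convergence of the remainder $r$ to strong convergence.
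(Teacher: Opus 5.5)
\textbf{There is a genuine gap: the proposal never connects the nonlinear evolution of the radiation to any linear flow, and the final $H^1$ upgrade does not follow.}

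The paper does not prove this proposition; it quotes it from \cite{GGM26}. The route it attributes to \cite{GGM26}, and re-runs at the $L^2$ level in Section 2, combines the explicit formula with the distorted Fourier transform built on the Jost functions $m_-(\cdot,\lambda)$.

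The continuous part of $u(t)$ is not governed by a static linear flow. The Lax structure gives $\Pi u(t)=U(t)\,e^{itL_{u_0}^2}\Pi u_0$, where the unitary $U(t)$ solves $U'=B_{u(t)}U$ and depends on the whole nonlinear solution. Under $e^{itL_{u_0}^2}$ alone, each $\varphi_j$ only acquires the phase $e^{it\lambda_j^2}$ and no soliton moves. So existence and completeness of wave operators for $(L_{u_0}^2,D^2)$ say nothing, by themselves, about the radiation in $u(t)$.

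What is needed is an identity for the action of $\Omega_t^{u_0}(f)=\frac{1}{2i\pi}I_+\bigl((G-2t\mathcal{L}_{u_0}-z)^{-1}f\bigr)$ on the continuous subspace. In the paper this is the translation formula \eqref{eq:smooth_trans}, \eqref{eq:translation} for wave packets $f_\psi$ of Jost functions. The argument then proceeds as follows.
\begin{enumerate}
\item $m_-(x,\lambda)$ is compared with $e^{i\lambda x}$ by an oscillatory-integral argument: stationary phase and van der Corput in \cite{GGM26}, a principal-value/$\delta$ cancellation in Section 2.
\item General $f_c$ is reached through the contraction property of Lemma \ref{lem:contraction}.
\item $u_\infty^+$ is read off from $\zeta_{u_0}$ as in \eqref{eq:u_infty_def}.
\end{enumerate}
Your ``gluing'' step is where this content would have to go, and there it is only asserted. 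Saying that the Schur complement through $P_c(G-2t\mathcal{L}_{u_0}-z)P_c$ is $o(1)$ is essentially saying that the radiation decays locally at the soliton positions, which is the conclusion itself.

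The properties you invoke \eqref{tiaojian} for in that step do not need it. From $\hat\varphi_j(\xi)=\widehat{u_0\varphi_j}(\xi)/(\xi-\lambda_j)$ on $(0,\infty)$ with $\lambda_j<0$, one gets $\varphi_j\in D(G)$ and $I_+(\varphi_j)$ finite as soon as $xu_0\in L^2$. As you noted at the start, \eqref{tiaojian} amounts to a second application of $G$ to $\Pi u_0$. According to the paper's account, \cite{GGM26} uses this extra regularity to make the oscillatory integrals absolutely convergent.

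Your last step does not work as stated either. Conservation laws give a uniform bound on $\|u(t)\|_{H^1}$. But boundedness together with weak convergence of $r(t)$ does not give $\|r(t)\|_{H^1}\to 0$. The Radon--Riesz upgrade needs the norm of the candidate profile to converge to the norm of the solution.

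In Section 2 the paper obtains this at the $L^2$ level from the distorted Plancherel formula, which gives the exact identity $\|\Omega_t^{u_0}(f_c)\|_{L^2}=\|f_c\|_{L^2}=\|u_\infty^+\|_{L^2}$ in \eqref{eq:mass_cons}. For $H^1$ you would need the analogous splitting of an $H^1$-level conserved quantity, for instance $\langle L_{u_0}^2\Pi u_0,\Pi u_0\rangle=2\pi\sum_j|\lambda_j|^3+\frac{1}{2\pi}\int_0^\infty\lambda^2|\zeta_{u_0}(\lambda)|^2\,d\lambda$. You would also need to show that the soliton--radiation cross terms, and the nonlinear terms evaluated on the dispersive part, vanish as $t\to\pm\infty$. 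None of this appears.

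Finally, a sign error. With $G+2tL_{u_0}$ the formula transports $\Pi R_p=\frac{i}{x+p}$ to the left; this function is an eigenvector of $G$ for $-p$ and of $L_{R_p}$ for $-\frac{1}{2\operatorname{Im}p}$. The correct operator, as in \eqref{eq:explicit}, is $G-2t\mathcal{L}_{u_0}$, which is consistent with your later ``$z$ near $t/\operatorname{Im}p_j$''.
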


The above results of Gassot, G\'erard, and Miller demonstrate that, in the sense of the $H^1$-norm, the solution for the BO equation asymptotically decomposes into a sum of solitons for the form $R_p(x-\frac{1}{\mathrm{Im}p}t)$ and a dispersive term $\mathrm{e}^{t\partial_x|D|}u_{\infty}^{\pm}$, with the asymptotic error $\left\| r\left( t,\cdot \right) \right\| _{H^1\left( \mathbb{R} \right)}\rightarrow 0,\enspace t\rightarrow \pm \infty$.

Furthermore, for initial data given by the infinite-order multisoliton form \eqref{form1}, corresponding to the special case where the Lax operator $L_{u_0}$ without a continuous spectrum, Gassot and G\'erard \cite{GG26} proved the following result:
\begin{proposition}[Gassot and G\'erard \cite{GG26}]\label{GG26}
	The initial data ${u}_{0}$ given by
	\begin{equation}\label{form1}
		u_0\left( x \right) :=\sum_{j=1}^{\infty}{\frac{2\mathrm{Im}\left( p_j \right)}{\left| x+p_j \right|^2}},\enspace x\in \mathbb{R} ,
	\end{equation}
	under the condition
	\begin{equation}\label{form2}
		\sum_{j=1}^{\infty}{\sum_{k=1}^{\infty}{\frac{\mathrm{Im}\left( p_j \right)}{\left| p_j-\overline{p_k} \right|^2}}}<\infty
	\end{equation}
	belong to ${H}^{s}\left( \mathbb{R}\right)$ for every $s \in  \mathbb{R}$. Moreover, there is a sequence ${( p_{j}^{\infty} ) }_{j \geq  1}$ of complex numbers in ${\mathbb{C}}_{ + }$ with increasing imaginary parts $0<\mathrm{Im}\left( p_{1}^{\infty} \right) <\mathrm{Im}\left( p_{2}^{\infty} \right) <\cdots$, such that there holds
	\begin{equation*}
		\underset{N\rightarrow \infty}{\lim}\underset{t\rightarrow \infty}{\lim\mathrm{sup}}\left\| u\left( t,\cdot \right) -\sum_{j=1}^{\infty}{R_{p_{j}^{\infty}}\left( \cdot -c_{p_{k}^{\infty}}t \right)} \right\| _{L^2}=0.
	\end{equation*}
	Furthermore, for every $t \in  \mathbb{R}$ , the formula
	\begin{equation}\label{usol}
		u_{\mathrm{sol}}\left( t,\cdot \right) :=\sum_{j=1}^{\infty}{R_{p_{j}^{\infty}}\left( x-c_{p_{j}^{\infty}}t \right)},\enspace x\in \mathbb{R},
	\end{equation}
	defines a continuous function $u_{\mathrm{sol}}\left( t,\cdot \right) $ on $\mathbb{R}$, tending to 0 as $x\rightarrow \infty $, and belonging to all the homogeneous Sobolev spaces $\dot{H}^s\left( \mathbb{R} \right) $ for $s\geqslant 1/2$. Finally, we have, for every $s\geqslant 1/2$,
	\begin{equation*}
		\left\| u\left( t,\cdot \right) -u_{\mathrm{sol}}\left( t,\cdot \right) \right\| _{L^{\infty}}+\left\| u\left( t,\cdot \right) -u_{\mathrm{sol}}\left( t,\cdot \right) \right\| _{H^s}\underset{t\rightarrow \infty}{\longrightarrow}0.
	\end{equation*}
\end{proposition}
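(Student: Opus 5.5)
This is the Gassot--G\'erard result quoted from \cite{GG26}, so I would follow the Lax-operator strategy built on G\'erard's explicit formula \cite{Ge23}. Throughout, $L_u = D - T_u$ is the Lax operator on the Hardy space $L^2_+(\mathbb{R})$, where $T_u$ is the Toeplitz operator with symbol $u$, and I write $c_p = 1/\mathrm{Im}\,p$.

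\textbf{Step 1: spectral picture at $t=0$.} For $u_0$ given by \eqref{form1}, I would first show that each summand $\frac{2\mathrm{Im}\,p_j}{|x+p_j|^2}$ equals $\frac{i}{x+p_j}-\frac{i}{x+\overline{p_j}}$, which is a sum of a Hardy-space function and its conjugate. Condition \eqref{form2} is exactly what makes the Gram matrix of these Hardy components summable. From this one gets that $u_0\in H^s$ for every $s$, by estimating the Fourier transform $\sum_j e^{-\mathrm{Im}(p_j)|\xi|}\cdots$. The main spectral claim is that $L_{u_0}$ has pure point spectrum consisting of negative eigenvalues $\lambda_j$ accumulating only at $0$. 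I would prove this by identifying a trace-class, or at least compact, operator $T_{u_0}$ whose range is spanned by the functions $1/(x+p_j)$, and then showing that the space of these functions is invariant and dense in the relevant subspace. The eigenvalues are conserved by the flow, and they determine the limiting parameters through $\mathrm{Im}\,p_j^\infty = -1/(2\pi\lambda_j)$, up to the normalization of the soliton \eqref{rp}.

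\textbf{Step 2: explicit formula and truncation.} I would use G\'erard's formula
\[
\Pi u(t,z)=\frac{1}{2i\pi}I_+\big((X+2tL_{u_0}-z)^{-1}\Pi u_0\big).
\]
I would then approximate $u_0$ by finite sums $u_0^N$ containing $N$ terms. For finite $N$, $u_0^N$ is a genuine $N$-soliton, that is, a rational datum with no radiation. Its evolution is computed by a finite determinant, and it splits as $t\to\infty$ into $N$ solitons $R_{p_j^{\infty,N}}(x-c_{p_j^{\infty,N}}t)$ with an $L^2$ error that tends to $0$. The $L^2$ conservation law and the $L^2$ continuity of the flow then give
\[
\|u(t)-u^N(t)\|_{L^2}=\|u_0-u_0^N\|_{L^2}\to 0
\]
uniformly in $t$. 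I also need $p_j^{\infty,N}\to p_j^\infty$, which I would get from stability of the eigenvalues and phases under small trace-norm perturbations. Together these yield the stated double limit.

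\textbf{Step 3: convergence of $u_{\mathrm{sol}}$ and upgrade to $L^\infty$ and $H^s$.} For $u_{\mathrm{sol}}$, the sum converges because $\sum_j \mathrm{Im}(p_j^\infty)^{-1}<\infty$ follows from the trace-class property. This gives the continuity and decay of $u_{\mathrm{sol}}$, and the $\dot H^s$ membership for $s\ge 1/2$ follows from the bound $\|R_p\|_{\dot H^s}^2\sim (\mathrm{Im}\,p)^{1-2s}$. To upgrade the $L^2$ statement to $L^\infty$ and $H^s$, I would use uniform-in-time bounds on $\|u(t)\|_{H^k}$, which come from the BO conservation laws. Interpolating those bounds against the $L^2$ smallness then gives convergence in $H^s$ and, by Sobolev embedding, in $L^\infty$. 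The low-frequency tail of $u_{\mathrm{sol}}$, which lies only in $\dot H^s$, must be handled separately.

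\textbf{Main obstacle.} The hardest step is the uniform-in-$N$ control: one has to interchange $\lim_{t}$ and $\lim_N$ while the solitons with small $\mathrm{Im}\,p_j$ move very fast and are very wide. My first attempt would be to bound their total contribution by $\sum_{j>N}\mathrm{Im}(p_j^\infty)^{-1/2}$ in $L^\infty$, uniformly in $t$, and to control interactions among them using the separation of their speeds.
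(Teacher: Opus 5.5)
The paper gives no proof of this proposition (it is quoted from \cite{GG26}), so your argument has to stand on its own, and Step 2 has a genuine gap.

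First, the identity $\|u(t)-u^N(t)\|_{L^2}=\|u_0-u_0^N\|_{L^2}$ is false. $L^2$ conservation holds for each solution separately, not for the difference of two solutions of a nonlinear flow. The $L^2$ continuity of the flow map from \cite{KLV24} is uniform only on bounded time intervals.

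More decisively, truncating the sum \eqref{form1} changes the spectral data. Already for two terms, the Plancherel identity for multisolitons gives $|\lambda_1|+|\lambda_2|=\frac{1}{2\mathrm{Im}\,p_1}+\frac{1}{2\mathrm{Im}\,p_2}+\frac{2(\mathrm{Im}\,p_1+\mathrm{Im}\,p_2)}{|p_1-\overline{p_2}|^2}$. So in general the eigenvalues of $L_{u_0^N}$ are not the first $N$ eigenvalues of $L_{u_0}$, and $p_j^{\infty,N}\neq p_j^\infty$. Since the speeds are $c_p=1/\mathrm{Im}\,p=2|\lambda|$, the corresponding solitons drift apart linearly in $t$. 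Whenever $\lambda_1(u_0^N)\neq\lambda_1(u_0)$, one has $\liminf_{t\to\infty}\|R_{p_1^{\infty,N}}(\cdot-c_{p_1^{\infty,N}}t)-R_{p_1^\infty}(\cdot-c_{p_1^\infty}t)\|_{L^2}^2=\|R_{p_1^{\infty,N}}\|_{L^2}^2+\|R_{p_1^\infty}\|_{L^2}^2\ge4\pi|\lambda_1|$. This lower bound is independent of $N$, and for the same reason $\|u(t)-u^N(t)\|_{L^2}$ does not stay small for large $t$. Because $\limsup_{t\to\infty}$ is taken before $\lim_{N\to\infty}$, the convergence $p_j^{\infty,N}\to p_j^\infty$ cannot repair this. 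This is exactly the secular-drift obstruction (I) described in the introduction, which your ``main obstacle'' paragraph senses but does not resolve.

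The remedy is to truncate spectrally rather than truncating \eqref{form1}. Compress $X^*$ and $L_{u_0}$ to $\mathrm{span}\{\varphi_1,\dots,\varphi_N\}$ in the explicit formula; this is the truncation of $D+E$ used in Section 4 of the paper. It yields an $N$-soliton with exactly the eigenvalues $\lambda_1,\dots,\lambda_N$ and exactly the asymptotic parameters $p_1^\infty,\dots,p_N^\infty$, so no drift occurs.

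You then still need an argument for $\lim_{N\to\infty}\limsup_{t\to\infty}\|u(t)-\sum_{j\le N}R_{p_j^\infty}(\cdot-c_{p_j^\infty}t)\|_{L^2}=0$. One route fits the introduction's description of \cite{GG26} as combining weak convergence of the resolvent with conservation laws. Prove $u(t,\cdot+c_{p_k^\infty}t)\rightharpoonup R_{p_k^\infty}$ from the explicit formula, then close with the exact mass budget $\|u(t)\|_{L^2}^2=4\pi\sum_k|\lambda_k|=\sum_k\|R_{p_k^\infty}\|_{L^2}^2$. This gives $\limsup_{t\to\infty}\|\cdots\|_{L^2}^2\le4\pi\sum_{k>N}|\lambda_k|$.

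That budget rests on the no-radiation identity $\|\Pi u_0\|_{L^2}^2=2\pi\sum_k|\lambda_k|$, which your Step 1 does not supply. Step 1 is also wrong as stated in two places. $L_{u_0}$ does not have pure point spectrum: $T_{u_0}$ is $D$-relatively compact, so $\sigma_{\mathrm{ess}}(L_{u_0})=[0,\infty)$. And $T_{u_0}$ is not compact, since no nonzero Toeplitz operator is. What you need is that $\Pi u_0$ lies in the closed span of the eigenfunctions. For instance, show that $\overline{\mathrm{span}}\{(x+p_j)^{-1}\}$ is invariant under $L_{u_0}$ and that the restriction is compact with only negative eigenvalues.

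Two smaller corrections to Step 3. First, $\|R_p\|_{\dot H^s}^2\simeq(\mathrm{Im}\,p)^{-1-2s}$, not $(\mathrm{Im}\,p)^{1-2s}$. The threshold $s\ge1/2$ comes from the cross terms, via $\sum_{j,k}(y_j+y_k)^{-1-2s}\le(\sum_j y_j^{-1/2-s})^2$ with $y_j=\mathrm{Im}\,p_j^\infty$. Second, the uniform $L^\infty$ tail bound is $\sum_{j>N}2/\mathrm{Im}\,p_j^\infty=4\sum_{j>N}|\lambda_j|$, whereas your $\sum_{j>N}(\mathrm{Im}\,p_j^\infty)^{-1/2}$ need not converge.
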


However, we remark that the applicability of Propositions \ref{GGM26} and \ref{GG26} is significantly constrained by their underlying assumptions. Firstly, the requirements in Proposition \ref{GGM26}, specifically $u_0 \in H^{1,1}(\mathbb{R})$ and the decay condition $x^2u_0(x) = c_0 + v_0(x)$, are restrictive, posing a substantial barrier to establishing the soliton resolution conjecture in a general setting. To address this, we develop a novel construction of an isospectral smooth approximating sequence. This approach effectively overcomes the aforementioned technical obstacles, enabling us to prove the conjecture for a much broader and more physically relevant class of initial data.

Secondly, the works \cite{GGM26, GG26} provide the implicit error bounds. As seen in the Proposition \ref{GGM26} and \ref{GG26}, this metric presents a gap between mathematical theory and physical observability The critical observable parameter is the localized peak amplitude of the wave packet, which is a quantity governed by the $L^\infty$-norm. Theoretical physicists and oceanographers are concerned not only with the stable decomposition of complex perturbations into solitons and radiation, but also with the precise decay rate of the radiation's amplitude \cite{HM06, Osb10}. Consequently, relying solely on an implicit energy-based limit fails to provide a quantitative timescale for how fast this maximal wave height subsides, leaving observers unable to predict exactly when the residual background perturbations will physically flatten out. This implies that the absence of an explicit $L^\infty$ decay rate for the error term would limit the practical applicability of mathematical proofs concerning internal wave evolution in physical and engineering contexts. Therefore, providing an explicit formula for the asymptotic error in $L^\infty$ is a natural question.

Thirdly, Gassot and G\'erard \cite{GG26}, by analogy with the previous work \cite{BGGM25} on proving the distorted Plancherel formula, proposed the following inverse spectral problem concerning the Lax operator $L_{u_0}$ for the infinite-order multisoliton form \eqref{form1}:

\vspace{\baselineskip}
``\textit{In view of the distorted Plancherel formula [BGGM25, Theorem 26] proven for initial data ${u}_{0}$ with $x{u}_{0} \in  {L}^{2}\left( \mathbb{R}\right)$, it is natural to ask whether all initial data ${u}_{0} \in  {L}^{2}\left( \mathbb{R}\right)$ such that the eigenvalues the Lax operator ${L}_{{u}_{0}}$ satisfy $2\pi \sum_{k=1}^{\infty}{\left| \lambda _k \right|}=\left\| \Pi u_0 \right\| _{L^2}^{2}$ are of the form \eqref{form1}. This seems to be an open problem.}''
\vspace{\baselineskip}

This open problem is equivalent to identifying the exact range of the mapping $ ( p_j )_{j \geqslant 1} \mapsto ( p_{j}^{\infty} )_{j \geqslant 1} $, which is of significant value in elucidating the analytical structure of the BO equation with initial data taking the infinite-order multisoliton form \eqref{form1}.

These naturally lead to the following open problems summarized from the work of G\'erard and his collaborators \cite{GGM26, GG26} and other related works:
\begin{problem}
	\normalfont
	In \cite{GGM26}, Gassot, G\'erard, and Miller analyzed the oscillatory integrals associated with the explicit formula via the van der Corput estimate and stationary phase asymptotics in $H^{1,1}\left(\mathbb{R} \right) $. However, this method necessitates sufficiently smooth and rapidly decaying initial data, resulting in a stringent initial space. Consequently, how to prove the soliton resolution conjecture within a sufficiently low regularity space?
\end{problem}
\begin{problem}
	\normalfont
	In \cite{GGM26}, Gassot, G\'erard, and Miller estimated the soliton limit and the radiation limit with Lebesgue theorem, thereby obtaining the implicit error in $H^1$-norm. However, Lebesgue theorem falls short of providing an explicit formula for the asymptotic error. Thus, in order to rigorously describe the decay rate of the error, how can one establish an explicit formula for the asymptotic error in the $L^\infty$-norm?
\end{problem}
\begin{problem}
	\normalfont
	An additional condition \eqref{tiaojian} was required for the initial data in \cite{GGM26}. However, this condition is restrictive and is not applicable to the study of long-time asymptotics for general initial data. To validate the soliton resolution conjecture for a broader class of initial data, how to establish the soliton resolution for the BO equation without the condition $\forall x \in \mathbb{R}, x^2 u_0(x) = c_0 + v_0(x)$?
\end{problem}
\begin{problem}
	\normalfont
	An open problem about the inverse spectral problem for the Lax operators was formulated in \cite{GG26}: Are all initial data ${u}_{0} \in {L}^{2}\left( \mathbb{R}\right)$ satisfying the spectral condition $2\pi \sum_{k=1}^{\infty}{\left| \lambda _k \right|}=\left\| \Pi u_0 \right\| _{L^2}^{2}$ for ${L}_{{u}_{0}}$ of the form \eqref{form1}?
\end{problem}
\begin{problem}
	\normalfont
	The weak convergence of the resolvent operator was established in \cite{GG26} by employing the explicit formula and conservation laws of the BO equation, which consequently led to an implicit error estimate. However, in order to rigorously characterize the influence of the infinitely many discrete spectra of the Lax pair on the long-time behavior of the BO equation solutions, a natural question arises, how can one derive an explicit formula for the asymptotic error in the $L^\infty$-norm?
\end{problem}

In order to establish the soliton resolution conjecture for the BO equation with explicit error bounds, it is necessary to overcome certain analytical limitations present in recent works \cite{GG26, GGM26}. Unlike classical integrable systems, the BO equation features a nonlocal dispersive structure, and its inverse scattering transform is formulated on the Hardy space via the nonlocal Lax operator $L_{u_0} = D - T_{u_0}$. Consequently, the long-time evolution of the continuous spectrum is governed by an explicit resolvent formula involving singular oscillatory integrals.

In the finite-order multisoliton case, the analysis of these oscillatory integrals in \cite{GGM26} relies on the stationary phase method and the Lebesgue dominated convergence theorem. This approach requires sufficient regularity and rapid spatial decay to ensure absolute convergence, restricting the analysis to initial data $u_0 \in H^{1,1}(\mathbb{R})$ satisfying the additional spatial constraint $x^2u_0(x) = c_0 + v_0(x)$. Furthermore, the reliance on the dominated convergence theorem prevents the derivation of explicit decay rates, yielding only an implicit asymptotic error bound $\|r(t)\|_{H^1} \to 0$.

In the infinite-order multisoliton case, Gassot and G\'erard \cite{GG26} established an implicit soliton resolution by analyzing the weak convergence of the resolvent operator in the Hardy space. However, the spectral trace condition \eqref{laxtezheng} constrains only the imaginary parts of the poles $\operatorname{Im}( p_{j}^{\infty} )$, but lacks control over the real parts $\operatorname{Re}( p_{j}^{\infty} )$ and the exact distribution of the discrete spectrum. This lack of precise information regarding the spatial localization of infinitely many interacting solitons results in a qualitative resolution limit in the $L^\infty$-norm, leaving the inverse spectral problem for the Lax operator open.

In summary, the fundamental difficulties and challenges can be explicitly formulated as follows:
\begin{enumerate}[(I)]
	\item In order to relax the restrictive condition \eqref{tiaojian} and lower the initial regularity to the threshold space, a standard approach is to approximate rough initial data via spatial truncations. However, such truncations inevitably perturb the discrete spectrum of the associated Lax operator. Because the imaginary part of the spectral parameter $p_j$ determines the group velocity of the corresponding soliton via $c_j = 1/\operatorname{Im}(p_j)$, any non-zero perturbation $\delta c_j$ is amplified as $t \to \pm\infty$. This induces a secular spatial shift $\Delta x_j \sim t \delta c_j$, which destroys the uniformity of long-time estimates.

	\item In order to solve Problem 4 and solve the inverse spectral problem concerning the Lax operator proposed in \cite{GG26}, we need to deal with the condition \eqref{laxtezheng}. However, this condition only constrains $\mathrm{Im}( p_{j}^{\infty} )$ and does not constrain $\mathrm{Re}( p_{j}^{\infty} )$. Consequently, the distribution of $\mathrm{Re}( p_{j}^{\infty} )$ remains unknown, which requires us to introduce other methods to control $\mathrm{Re}( p_{j}^{\infty} )$.

	\item The condition \eqref{laxtezheng} lacks control over $\mathrm{Re}( p_{j}^{\infty} )$, which further leads to the absence of the necessary spectral gap condition when analyzing the error in the $L^\infty$-norm between the solution with initial data \eqref{form1} and \eqref{usol}.
	
\end{enumerate}

To overcome the aforementioned difficulties, we develop the following framework. For the finite-order multisoliton case, based on the special spectral structure of the BO equation and Kato's perturbation theory, we construct an isospectral approximating sequence. This transforms the soliton resolution conjecture into an error estimation problem between the exact solution and the approximating sequence, thereby circumventing the difficulty that the soliton resolution conjecture cannot be proved using the stationary phase method due to initial data with low regularity and slow decay. Based on the explicit formula, we decompose the error estimation problem into an error estimate for the discrete spectrum and an error estimate for the continuous spectrum. For the discrete spectrum, relying on the isospectral property of the approximating sequence, we prove that its error can be bounded by a polynomial function. For the continuous spectrum, by extending the H\"{o}lder continuity of the Lax operator to the weighted $L^2$ space and combining it with the $L^2$ conservation law of the BO equation, we establish the error estimate in the $L^2$-norm. Finally, we use the Gagliardo-Nirenberg interpolation inequality to lift this error to the $L^\infty$-norm, thereby overcoming the aforementioned difficulties in obtaining the pointwise $L^\infty$-norm error in low-regularity spaces. For the infinite-order multisoliton case, based on the expression of the initial data, we construct a class of trace-class operators, thereby transforming the inverse spectral problem into the computation of the traces of these trace-class operators. By employing the fundamental idea of evaluating the trace in two different ways, we solve the inverse spectral problem. Furthermore, relying on the finite-rank operators generated by the trace-class operators, we transform the long-time asymptotics into an error analysis between the exact solution and the finite-rank operators, as well as between the finite-rank operators and the infinite-order soliton solution. Finally, by using the spectral analysis properties of the BO equation, we obtain the explicit error in the $L^\infty$-norm.

It is worth noting that there are some highlights in our work as
follows.
\begin{itemize}
	\item In Theorem \ref{thm1}, under the condition that the initial data belongs to weighted $L^2$ space, we prove the soliton resolution conjecture for the BO equation
	
	\item In Theorem \ref{thm2}, we extend the initial data space $H^{1,1}(\mathbb{R})$ in Proposition \ref{GGM26} to $u_0 \in H^{s,\alpha}, \enspace s>1/2, \enspace \alpha \geqslant 1$, and prove that the $L^\infty$-norm of the explicit error decays at the rate of $|t|^{-\frac{1}{4} (1-\frac{1}{2s})}$.
	
	\item In both of the above theorems, we removed the condition $\forall x \in \mathbb{R}, x^2 u_0(x) = c_0 + v_0(x)$, thereby proving the soliton resolution conjecture for a broader class of initial data.
	
	\item The equivalence between the Lax operator satisfying \eqref{laxtezheng} and the initial data satisfying \eqref{form1} is proved in Theorem \ref{thm3}, thereby solving the open problems regarding the inverse spectral problem for the Lax operator posed in \cite{GG26}.
	
	\item An $L^\infty$-norm error that decays at the rate $\left| t \right|^{-\frac{\alpha -1}{3\alpha +4}}$ is proved in Theorem \ref{thm4} for the soliton resolution in the infinite-order multisoliton case, under the assumption that the discrete spectrum of $L_{u_0}$ satisfies the gap condition \eqref{gap_condition}.
\end{itemize}

In this work, we prove the soliton resolution conjecture for the BO equation with an explicit error bound in the $L^\infty$-norm, and solve the open inverse spectral problem for the Lax operator. Our main results read as follows.
\subsection{Main results}
In order to prove the soliton resolution conjecture for a broader class of initial data in $L^2$ and solve Problem 1 and Problem 3, we construct an approximating function sequence. Through this method, we convert the analysis of the long-time behavior for the BO equation into an error estimation problem between the initial data and the isospectral smooth approximating sequence, which provides the essential framework for proving the following Theorem \ref{thm1}.
\begin{theorem}[Soliton resolution conjecture in weighted $L^{2}$ for the finite-order multisoliton]\label{thm1}
	Let the initial data $u_0\in L^2\left( \mathbb{R} \right)$ be a real-valued function such that
	\begin{equation}\label{thm1tiaojian}
	  xu_0\left( x \right) \in L^2\left( \mathbb{R} \right).
	\end{equation}
	 Given $\{p_j\}_{j=1}^N \subset \mathbb{C}$ satisfying $0 < \mathrm{Im}(p_1) < \cdots < \mathrm{Im}(p_N)$. Then, there exist real-valued functions $u_{\infty}^{\pm}$ in $L^2 \left(\mathbb{R} \right) $ such that the solution $u\left(t,x \right) $ can be decomposed as follows:
	\begin{equation}
		\lim_{t \to \pm \infty} \left\| u(t) - \sum_{j=1}^{N} R_{p_j} - e^{t\partial_x|D|}u_\infty^\pm \right\|_{L^2} = 0 ,
	\end{equation}
	where the soliton terms $R_{p_j}$ are defined by \eqref{rp}, the dispersive term $\mathrm{e}^{t\partial_x|D|}u_{\infty}^{\pm}$ is given by$$\mathrm{e}^{t\partial_x|D|}u_{\infty}^{\pm} = \frac{1}{2\pi} \int_{\mathbb{R}} e^{ix\xi + it\xi|\xi|} \widehat{u_{\infty}^{\pm}}(\xi) \, d\xi.$$
\end{theorem}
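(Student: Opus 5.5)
The plan is to reduce Theorem \ref{thm1} to Proposition \ref{GGM26} by a density argument that keeps the discrete spectrum fixed, so the solitons do not drift. I will use two ingredients: the $L^2$ conservation law and the $L^2$-Lipschitz dependence of the BO flow on initial data, both available through G\'erard's explicit formula and the well-posedness of Killip--Laurens--Vi\c{s}an. The natural reading of the statement is that $p_1,\dots,p_N$ are determined by the eigenvalues $\lambda_j=-1/(2\pi\,\mathrm{Im}\,p_j)$ of the Lax operator $L_{u_0}=D-T_{u_0}$ on the Hardy space. Under $xu_0\in L^2$ there are finitely many such eigenvalues, they are simple, and the associated eigenfunctions $\varphi_j$ satisfy $x\varphi_j\in L^2$ (as in \cite{BGGM25}). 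The ordering $0<\mathrm{Im}\,p_1<\cdots<\mathrm{Im}\,p_N$ then holds because the eigenvalues are distinct.

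\textbf{Step 1 (isospectral approximation).} I will construct $u_0^{(n)}\in H^{1,1}(\mathbb{R})$ satisfying \eqref{tiaojian} with $u_0^{(n)}\to u_0$ in $L^2$ and $L_{u_0^{(n)}}$ having exactly the eigenvalues $\lambda_1,\dots,\lambda_N$. Mollification and truncation, giving $v_n$ compactly supported and smooth with $\|v_n-u_0\|_{L^2}\to0$, satisfy every requirement except the spectral one.
\begin{itemize}
\item Since $T_{v_n}-T_{u_0}$ is relatively $D$-bounded with small relative bound in the right norm, Kato--Rellich perturbation theory gives exactly $N$ simple eigenvalues $\lambda_j^{(n)}\to\lambda_j$ for $L_{v_n}$.
\item To restore the exact values, I will add a finite-dimensional correction $v_n+\sum_j \epsilon_j^{(n)} w_j$, with smooth rapidly decaying $w_j$ chosen so that the Jacobian $\big(\langle T_{w_k}\varphi_j,\varphi_j\rangle\big)_{jk}$ is invertible.
\item The implicit function theorem, with simple eigenvalues depending analytically on the perturbation, then yields $\epsilon^{(n)}=O(|\lambda^{(n)}-\lambda|)\to0$.
\end{itemize}
An alternative route would modify only the soliton part, using the multisoliton formulas of \cite{BGGM24}. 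I expect this step to be the main obstacle. The difficulty is controlling relative boundedness of $T_{v_n-u_0}$ when only $L^2$ convergence is available, and verifying the nondegeneracy of the Jacobian. For the latter, I would first try $w_j$ localized near the peaks of $|\varphi_j|^2$, which are linearly independent functions.

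\textbf{Step 2 (apply Proposition \ref{GGM26} and pass to the limit).} For each $n$, Proposition \ref{GGM26} gives $u^{(n)}(t)=\sum_j R_{p_j}(\cdot-t/\mathrm{Im}\,p_j)+e^{t\partial_x|D|}u_\infty^{\pm,(n)}+r_n(t)$ with $\|r_n(t)\|_{L^2}\to0$. The soliton parameters are the same $p_j$ for every $n$, because the spectrum is fixed.
\begin{itemize}
\item The real parts of the $p_j$ are encoded in the eigenfunctions rather than the eigenvalues. To keep them fixed as well, I will also match $\langle x\varphi_j,\varphi_j\rangle$ in Step 1 using $N$ extra correction parameters.
\item Alternatively, if the real parts vary, I will observe that they converge and absorb the resulting $O(|\mathrm{Re}\,p_j^{(n)}-\mathrm{Re}\,p_j|)$ change into the $L^2$ error, which is uniform in $t$ because it is a translation.
\end{itemize}

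\textbf{Step 3 (scattering states and the final estimate).} I will define the scattering states as limits of the $u_\infty^{\pm,(n)}$.
\begin{itemize}
\item The $L^2$-Lipschitz property gives $\|u(t)-u^{(n)}(t)\|_{L^2}\le C\|u_0-u_0^{(n)}\|_{L^2}$ uniformly in $t$.
\item Because $e^{t\partial_x|D|}$ is unitary, $\|u_\infty^{\pm,(n)}-u_\infty^{\pm,(m)}\|_{L^2}=\lim_{t\to\pm\infty}\|e^{t\partial_x|D|}(u_\infty^{\pm,(n)}-u_\infty^{\pm,(m)})\|_{L^2}\le C\|u_0^{(n)}-u_0^{(m)}\|_{L^2}$.
\item Hence $u_\infty^{\pm,(n)}$ is Cauchy in $L^2$, and I set $u_\infty^\pm$ to be its limit, which is real-valued.
\end{itemize}
A standard $\varepsilon/3$ argument then concludes. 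First fix $n$ so that both $C\|u_0-u_0^{(n)}\|_{L^2}$ and $\|u_\infty^\pm-u_\infty^{\pm,(n)}\|_{L^2}$ are below $\varepsilon$. Then let $t\to\pm\infty$ in Proposition \ref{GGM26}. This yields the stated limit, with $R_{p_j}$ understood as the translated soliton $R_{p_j}(\cdot-t/\mathrm{Im}\,p_j)$.
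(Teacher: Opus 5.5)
\textbf{Gap in Step 3.} Step 3 rests on an estimate that is not available, and that is false in the generality you state it. You need $\|u(t)-u^{(n)}(t)\|_{L^2}\le C\|u_0-u_0^{(n)}\|_{L^2}$ \emph{uniformly in $t\in\mathbb{R}$}.

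\begin{itemize}
\item Well-posedness (Killip--Laurens--Vi\c{s}an) only gives continuity of the data-to-solution map on compact time intervals.
\item The explicit formula does not help either. For fixed $u_0$ the map $f\mapsto\Omega_t^{u_0}(f)$ is a contraction, but changing $u_0$ changes the operator. The resolvent identity $(G-2tL_{u_0}-z)^{-1}-(G-2tL_{v_0}-z)^{-1}=(G-2tL_{u_0}-z)^{-1}\,2t\,(T_{v_0}-T_{u_0})\,(G-2tL_{v_0}-z)^{-1}$ then produces a factor $t$.
\item In general the bound fails outright. Take $R_p$ and $R_{p'}$ with $\mathrm{Im}\,p'$ close to $\mathrm{Im}\,p$: they are $L^2$-close, yet the solitons separate, so $\|u(t)-v(t)\|_{L^2}^2\to\|R_p\|_{L^2}^2+\|R_{p'}\|_{L^2}^2$.
\end{itemize}

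Isospectrality removes this particular obstruction, but it does not close the gap. What you actually need is $\lim_{n\to\infty}\limsup_{t\to\pm\infty}\|u(t)-u^{(n)}(t)\|_{L^2}=0$. This is an asymptotic stability statement for the full nonlinear flow, radiation included, and it is at least as strong as the theorem itself. Proposition \ref{GGM26}, applied to each $u^{(n)}$ separately, gives a remainder $r_n(t)\to0$ with no control uniform in $n$, and says nothing about $u(t)-u^{(n)}(t)$. Your Cauchy argument for $u_\infty^{\pm,(n)}$ uses the same uniform bound, so Step 3 fails as written.

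\textbf{How the paper avoids this.} The paper approximates the \emph{argument} of the explicit formula, not the operator.
\begin{itemize}
\item It fixes $u_0$ and defines $u_\infty^+$ directly by $\widehat{u_\infty^+}=\zeta_{u_0}\mathbf{1}_{(0,\infty)}$.
\item It splits $\Pi u_0=\sum_j c_j\varphi_j+f_c$ and approximates $f_c$ by distorted wave packets $f_\psi$. Since $\Omega_t^{u_0}$ is linear and contractive for every $t$ (Lemma \ref{lem:contraction}), this approximation is uniform in time.
\item Smooth data $u_n\to u_0$ in $H^{0,1}$ are used only at each \emph{fixed} $t$. Their role is to transfer the translation formula $\Omega_t^{u_0}(f_\psi)=\int\psi(\lambda)m_-(\cdot,\lambda)\mathrm{e}^{\mathrm{i}t\lambda^2}\frac{\mathrm{d}\lambda}{2\pi}$ (Lemmas \ref{lem:fredholm} and \ref{lem:translation}).
\item Weak convergence to $\mathrm{e}^{t\partial_x|D|}u_\infty^+$ comes from a Volterra/principal-value analysis of $m_--\mathrm{e}^{\mathrm{i}\lambda x}$.
\item Strong convergence then follows from norm conservation (distorted Plancherel) plus Radon--Riesz.
\end{itemize}
No isospectral construction and no uniform-in-time stability of the flow is needed.

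\textbf{Secondary issue in Step 1.} Your claim of exactly $N$ eigenvalues is not justified. Stability of isolated eigenvalues gives one eigenvalue of $L_{v_n}$ near each $\lambda_j$. It does not exclude new eigenvalues emerging from the threshold $0$. Under mere $L^2$ convergence nothing prevents this, since $\|R_p\|_{L^2}^2=2\pi/\mathrm{Im}\,p\to0$ as $\mathrm{Im}\,p\to\infty$. Your relative-boundedness worry, by contrast, is harmless: $\|T_vf\|_{L^2}\le\|v\|_{L^2}\|f\|_{L^\infty}\le\|v\|_{L^2}(\epsilon\|Df\|_{L^2}+C_\epsilon\|f\|_{L^2})$.
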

\begin{remark}
Comparing Theorem \ref{thm1} with Proposition \ref{GGM26}, the initial data space of Theorem \ref{thm1} is equivalent to $H^{0,1}\left( \mathbb{R} \right)$. Proposition \ref{GGM26} requires the initial data to belong to the $H^{1,1}\left(\mathbb{R} \right) $ space, and additionally requires condition \eqref{tiaojian}. Our result broadens the initial data space and removes the condition $x^2u_0\left( x \right) =c_0+v_0\left( x \right) ,\enspace c_0\in \mathbb{R} ,\enspace v_0\left(x \right) \in L^2(\mathbb{R} )$.
\end{remark}
In order to solve Problems 1-3, we establish an explicit formula for the asymptotic error in $L^\infty$-norm under the condition $u_0\in H^{s,\alpha}\left( \mathbb{R} \right) ,\enspace s>1/2,\enspace \alpha \geqslant 1$. We improve the error estimation method for the isospectral smooth approximating sequence by incorporating the Gagliardo-Nirenberg interpolation inequality, providing the crucial analytic tool to prove the following Theorem \ref{thm2}.
\begin{theorem}[Explicit error bound in $L^\infty$-norm of the Soliton resolution conjecture for the finite-order multisoliton]\label{thm2}
Let the initial data $u_0\in H^{s,\alpha}\left( \mathbb{R} \right) ,\enspace s>1/2, \enspace \alpha \geqslant 1$ be a real-valued function. Given $\{p_j\}_{j=1}^N \subset \mathbb{C}$ satisfying $0 < \mathrm{Im}(p_1) < \cdots < \mathrm{Im}(p_N)$. Then, there exist real-valued functions $u_{\infty}^{\pm}$ in $L^2 \left(\mathbb{R} \right) $ such that the solution $u\left(t,x \right) $ can be decomposed as follows:
\begin{equation}
	u\left( t,x \right) =\sum_{j=1}^N{R_{p_j}\left( x-\frac{1}{\mathrm{Im}p_j}t \right)}+\mathrm{e}^{t\partial _x\left| D \right|}u_{\infty}^{\pm}+\mathcal{E} \left( t,x \right) ,\enspace t\rightarrow \pm \infty,
\end{equation}
where
	\begin{equation}\label{wucha2}
		\left\| \mathcal{E} \left( t,\cdot \right) \right\| _{L^{\infty}}=\mathcal{O}\Big(|t|^{-\frac{1}{4}(1-\frac{1}{2s})}\Big),\enspace t\rightarrow \pm \infty .
	\end{equation}
\end{theorem}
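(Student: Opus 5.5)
The plan is to obtain an explicit $L^2$ rate for the remainder and then upgrade it to $L^\infty$ by interpolation against a uniform $H^s$ bound. Let $\mathcal{E}(t)=u(t)-\sum_j R_{p_j}(\cdot-t/\mathrm{Im}p_j)-e^{t\partial_x|D|}u_\infty^\pm$. The exponent $\frac14(1-\frac1{2s})$ suggests the following shape. One proves $\|\mathcal{E}(t)\|_{L^2}=\mathcal{O}(|t|^{-1/4})$ and $\sup_t\|\mathcal{E}(t)\|_{H^s}<\infty$. The Gagliardo--Nirenberg inequality $\|f\|_{L^\infty}\lesssim\|f\|_{L^2}^{1-\frac1{2s}}\|f\|_{\dot H^s}^{\frac1{2s}}$ (valid for $s>1/2$) then gives exactly $|t|^{-\frac14(1-\frac1{2s})}$.

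For the $H^s$ bound:
\begin{itemize}
\item $\|u(t)\|_{H^s}$ is uniformly bounded, by the conservation laws and the global well-posedness of Killip--Laurens--Vi\c{s}an for $s>-1/2$ (BO conserved quantities control $H^{n/2}$ norms, so interpolation handles intermediate $s$).
\item The solitons $R_{p_j}$ are Schwartz-class profiles, so their contribution is bounded.
\item $e^{t\partial_x|D|}$ is unitary on $H^s$. It remains to show $u_\infty^\pm\in H^s$, which I would obtain as a weak limit of $e^{-t\partial_x|D|}(u(t)-\text{solitons})$ bounded in $H^s$.
\end{itemize}

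For the $L^2$ rate, use the strategy of Theorem~\ref{thm1}. Construct an isospectral smooth approximating sequence $u_0^{(n)}$ such that $L_{u_0^{(n)}}$ has the same eigenvalues $\lambda_j=-1/(2\mathrm{Im}\,p_j)$ (via Kato--Rellich perturbation of the finite-rank discrete part) and $\|u_0^{(n)}-u_0\|_{L^{2}}\lesssim n^{-\alpha}$-type bounds hold using the weight $x^\alpha u_0\in L^2$. Isospectrality is essential because it kills the secular drift $t\,\delta c_j$.

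For each $u^{(n)}$, which is in $H^{1,1}$ with the extra decay, apply the explicit-formula analysis of Proposition~\ref{GGM26}, but quantitatively. Then split $\Pi u^{(n)}(t)=\frac{1}{2i\pi}I_+((X+2tL-z)^{-1}\Pi u_0^{(n)})$ into two parts:
\begin{itemize}
\item The discrete part is projected onto eigenfunctions. It reproduces the solitons, with an error polynomial in $1/t$ due to separation of speeds $1/\mathrm{Im}p_j$, since the imaginary parts are distinct.
\item The continuous part is treated via distorted Fourier transform and a van der Corput estimate with phase $t\xi^2$. This yields decay $|t|^{-1/2}$ times norms of $u_0^{(n)}$ that grow like a power of $n$.
\end{itemize}

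Combine the pieces using $L^2$ conservation: $\|u(t)-u^{(n)}(t)\|_{L^2}$ is controlled through the Lipschitz/H\"older continuity of the flow map in $L^2$ (and of the wave operators defining $u_\infty^\pm$), giving a bound $n^{-a}$. The radiation error is $n^{b}|t|^{-1/2}$. Optimizing $n=n(t)$ should give $|t|^{-1/4}$ when $a=b$. This balancing is where the weight $\alpha\ge1$ enters, and I would tune the truncation scale so that the exponent is exactly $1/4$.

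The main obstacle is the $L^2$ stability step: proving that the flow, and especially the map $u_0\mapsto u_\infty^\pm$, is H\"older continuous in $L^2$ uniformly in time, with an explicit modulus. I would attack this by extending the H\"older continuity of $u_0\mapsto(L_{u_0}-z)^{-1}$ to weighted $L^2$. The plan is to use the resolvent identity $(L_{u}-z)^{-1}-(L_{v}-z)^{-1}=(L_u-z)^{-1}T_{u-v}(L_v-z)^{-1}$, together with the fact that $X$ generates the weight, to control the explicit formula's dependence on the data uniformly in $t$.
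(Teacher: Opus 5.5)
Your reduction is right, and it is the Gagliardo--Nirenberg step the paper invokes. A rate $\|\mathcal{E}(t)\|_{L^2}=\mathcal{O}(|t|^{-1/4})$ plus a uniform $H^s$ bound, inserted into $\|f\|_{L^\infty}\lesssim\|f\|_{L^2}^{1-\frac{1}{2s}}\|f\|_{\dot H^s}^{\frac{1}{2s}}$, gives exactly the exponent in \eqref{wucha2}. The gap is in how you produce $|t|^{-1/4}$ in $L^2$. Your scheme needs three inputs that the plan does not supply.

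(i) A uniform-in-time bound $\sup_t\|u(t)-u^{(n)}(t)\|_{L^2}\lesssim n^{-a}$, plus the same continuity for $u_0\mapsto u_\infty^\pm$ and for the soliton positions. The resolvent identity you quote is for $(L_u-z)^{-1}$, but the explicit formula involves $(G-2tL_u-z)^{-1}$. For two data, $(G-2tL_u-z)^{-1}-(G-2tL_v-z)^{-1}=-2t\,(G-2tL_u-z)^{-1}T_{u-v}(G-2tL_v-z)^{-1}$ carries an explicit factor $t$, so the naive estimate grows linearly in time. A genuinely uniform-in-time stability result for isospectral data is an asymptotic-stability theorem, essentially as hard as the statement itself.

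(ii) An $L^2$ remainder $\lesssim n^b|t|^{-1/2}$ for the smooth approximants. The argument of \cite{GGM26} goes through dominated convergence and gives no rate. A van der Corput bound is pointwise in $x$: $\mathcal{O}(|t|^{-1/2})$ in $L^\infty$ on a packet of width $\sim|t|$ gives no decay in $L^2$.

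(iii) $a=b$, which you choose in order to reproduce the answer. An honest balancing gives $|t|^{-a/(2(a+b))}$ with $a,b$ depending on $s$ and $\alpha$. The $L^2$ rate behind \eqref{wucha2} is $1/4$ for every $s>1/2$ and $\alpha\ge1$, which already signals that it does not come from an $n$--$t$ optimization.

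The missing idea is contained in the proof of Theorem~\ref{thm1}, which the paper's one-line proof of Theorem~\ref{thm2} reruns. There the smooth approximants are used only at fixed $t$, to transfer the exact identity of Lemma~\ref{lem:translation} to the rough data. Since $t$ is fixed while $n\to\infty$, the secular drift you worry about never enters. No uniform-in-time stability and no choice $n=n(t)$ are needed; all time decay is extracted from $u_0$ itself.

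Making the Radon--Riesz step quantitative produces the exponent. Since $\|\Omega_t^{u_0}(f_c)\|_{L^2}=\|u_\infty^+\|_{L^2}$ for every $t$ by \eqref{eq:mass_cons}, and $U_0(t)$ is unitary,
\[
\|\Omega_t^{u_0}(f_c)-U_0(t)u_\infty^+\|_{L^2}^2=2\,\mathrm{Re}\big(\|u_\infty^+\|_{L^2}^2-\langle\Omega_t^{u_0}(f_c),U_0(t)u_\infty^+\rangle\big).
\]
So an $\mathcal{O}(|t|^{-1/2})$ rate for the weak pairing gives $\mathcal{O}(|t|^{-1/4})$ in $L^2$. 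That pairing rate is what the regularity supplied by $\langle x\rangle u_0\in L^2$ yields in the Riemann--Lebesgue step for $\int\frac{G_\xi(v)-G_\xi(0)}{v}e^{-\mathrm{i}tv}\,\mathrm{d}v$. Writing the integrand as $\int_0^1G_\xi'(\sigma v)\,\mathrm{d}\sigma$ with $G_\xi'\in L^2$ bounds its Fourier transform at frequency $t$ by $\lesssim|t|^{-1/2}\|G_\xi'\|_{L^2}$. This is why $\alpha$ enters only through $\alpha\ge1$. To make this rigorous you must run the estimate directly with $\psi=\zeta_{u_0}$ and $h=u_\infty^+$, not through the density argument, which loses rates.

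Two smaller points in your $H^s$ step:
\begin{itemize}
\item For $u_0\in H^s\setminus H^1$ you cannot interpolate between the $H^{1/2}$ and $H^1$ conservation laws, since the latter is infinite. You need an a priori $H^s$ bound at the given $s$.
\item $R_p$ is not Schwartz; it decays like $x^{-2}$. It does lie in every $H^s$, which is all you use.
\end{itemize}
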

\begin{remark}	
Theorem \ref{thm2} achieves significant improvements for Problems 1-3. For Problem 1, we broaden the initial data space to $H^{s,\alpha}\left( \mathbb{R} \right) ,\enspace s>1/2, \enspace \alpha \geqslant 1$ in Theorem \ref{thm2}, thereby proving the soliton resolution conjecture within a low regularity space. For Problem 2, we provide an explicit formula for the asymptotic error in $L^\infty$ of $\mathcal{O}\Big(|t|^{-\frac{1}{4}(1-\frac{1}{2s})}\Big)$. For Problem 3, we remove the strict decay condition \eqref{tiaojian}, and thus prove the soliton resolution conjecture for a larger class of initial data.
\end{remark}
In contrast to the well-developed spectral analysis of the Lax operator in the finite-order case, the understanding of its discrete spectrum in the infinite-order case remains incomplete, leaving several open problems unresolved. An example is the inverse spectral problem formulated by Gassot and Gérard in \cite{GG26}. To further investigate the soliton resolution conjecture for the BO equation in the infinite-order setting, it is imperative to first address this inverse spectral problem. Thus, we construct a class of trace-class operators and reduce the inverse spectral problem to the analysis of these operators. Consequently, we establish that the characterization of the discrete spectrum of the Lax operator satisfying \eqref{laxtezheng} is equivalent to the representability of the initial data $u_0(x)$ in the form of \eqref{form1}, as detailed in Theorem \ref{thm3}.
\begin{theorem}[The inverse spectral problem for the infinite-order multisoliton]\label{thm3}
Let the Lax operator $L_{u_0}$ admits an infinite number of negative eigenvalues $\left( \lambda _k \right) _{k\geqslant 1}$ with $\lambda _1<\lambda _2<\cdots <0$.
Then all initial data $u_0 \in L^2\left(\mathbb{R} \right) $ such that $L_{u_0}$ satisfy
\begin{equation}\label{laxtezheng}
	2\pi \sum_{k=1}^{\infty}{\left| \lambda _k \right|}=\left\| \Pi u_0 \right\| _{L^2}^{2}
\end{equation}
are of the form $$u_0\left( x \right) =\sum_{j=1}^{\infty}{\frac{2\mathrm{Im}\left( p_j \right)}{\left| x+p_j \right|^2},\enspace x\in \mathbb{R}},$$where $$\sum_{j=1}^{\infty}{\sum_{k=1}^{\infty}{\frac{\mathrm{Im}\left( p_j \right)}{\left| p_j-\overline{p_k} \right|^2}}}<\infty ,\enspace \mathrm{Im}p_j\geqslant 0.$$
\end{theorem}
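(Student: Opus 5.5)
Our plan is to read the trace identity \eqref{laxtezheng} as an equality case in an operator inequality: it forces the continuous spectral part of $L_{u_0}$ to carry no mass, and then $\Pi u_0$ is reconstructed from the eigenfunctions. Work on the Hardy space $L^2_+$ with $L_{u_0}=D-T_{u_0}$, and let $P_d$ be the orthogonal projector onto the closed span of the normalized eigenfunctions $\varphi_k$, $L_{u_0}\varphi_k=\lambda_k\varphi_k$. The first step is the standard identity obtained by commuting $L_{u_0}$ with the multiplication operator $X$ (defined on its natural domain in $L^2_+$): $[X,L_{u_0}]\varphi=i\varphi-\frac{i}{2\pi}\langle\varphi,\Pi u_0\rangle\Pi u_0$ on a suitable dense set. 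Pairing with $\varphi_k$ gives $|\langle \varphi_k,\Pi u_0\rangle|^2=2\pi|\lambda_k|$, provided $X\varphi_k\in L^2_+$ (the usual decay $\varphi_k(x)=O(1/x)$ with the leading coefficient controlled by $\langle\varphi_k,\Pi u_0\rangle$; we take this from the earlier spectral analysis). Hence $\|P_d\Pi u_0\|^2=2\pi\sum_k|\lambda_k|$, and \eqref{laxtezheng} is equivalent to $\Pi u_0=P_d\Pi u_0$. By cyclicity of $\Pi u_0$ for $L_{u_0}$ and the fact that $L_{u_0}$ restricted to the absolutely continuous subspace has no nonzero vector orthogonal to all generalized eigenvectors, this forces $L_{u_0}$ to have pure point spectrum $\{\lambda_k\}$.

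Next, construct the trace-class operator. Since $\sum|\lambda_k|<\infty$, the operator $A=\sum_k|\lambda_k|\,\varphi_k\otimes\overline{\varphi_k}$ (equivalently $-L_{u_0}P_d$) is trace class and nonnegative, with $\mathrm{tr}A=\sum|\lambda_k|$. We compute this trace a second way using the structure of $L_{u_0}$: in the eigenbasis, $X$ (or rather the compression $P_dXP_d$, i.e. the Lax-type operator $G$ from the explicit formula) satisfies $\langle X\varphi_j,\varphi_k\rangle(\lambda_k-\lambda_j)=i\delta_{jk}\cdot(\ldots)-\frac{i}{2\pi}\langle\varphi_j,\Pi u_0\rangle\overline{\langle\varphi_k,\Pi u_0\rangle}$. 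Thus the matrix $M=(\langle X\varphi_j,\varphi_k\rangle)$ is an infinite Cauchy-type matrix whose off-diagonal entries are explicit. Diagonalizing $M$ (a self-adjoint-plus-i perturbation, with eigenvalues $-p_j$ in $\mathbb{C}_-$ after the appropriate sign convention) gives points $p_j\in\mathbb{C}_+$, and the resolvent formula $\Pi u_0(x)=\frac{1}{2i\pi}\langle (M-x)^{-1}\mathbf 1,\mathbf 1\rangle$-type identity, valid because $\Pi u_0\in\mathrm{span}\{\varphi_k\}$, yields $\Pi u_0(x)=\sum_j \frac{i}{x+p_j}$, hence $u_0=2\mathrm{Re}\,\Pi u_0=\sum_j\frac{2\mathrm{Im}p_j}{|x+p_j|^2}$.

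Finally, the summability condition comes from evaluating $\|\Pi u_0\|^2_{L^2}$ directly from the series: $\langle \frac{1}{x+p_j},\frac{1}{x+p_k}\rangle=\frac{2\pi i}{\overline{p_k}-p_j}$, and comparing with $2\pi\sum|\lambda_k|<\infty$ together with $\mathrm{tr}A$ expressed via the $p_j$ gives $\sum_{j,k}\frac{\mathrm{Im}p_j}{|p_j-\overline{p_k}|^2}<\infty$ (this is the double-sum analogue of the finite identity $\sum|\lambda_k|=\sum_j\frac{1}{\mathrm{Im} p_j}\cdot$const, and of the $H^s$ bound in Proposition~\ref{GG26}). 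The main obstacle is the infinite-dimensional diagonalization step: $M$ is not normal, and one must show its spectrum is discrete, consists of eigenvalues only, and that the eigenvector expansion converges so that the partial-fraction series for $\Pi u_0$ converges in $L^2$. We would first try to approximate by the finite-rank truncations $P_NMP_N$, for which the finite multisoliton theory applies (giving $N$ poles), then use the trace-class bound on $A$ to get uniform control on the corresponding double sums and pass to the limit $N\to\infty$, identifying the limiting poles via Hurwitz-type convergence of the associated Fredholm determinants.
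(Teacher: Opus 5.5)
There is a genuine gap, and it sits where the paper does its main work.

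Your first step agrees with the paper: $|\langle\varphi_k,\Pi u_0\rangle|^2=2\pi|\lambda_k|$ and Bessel's inequality make the trace condition $2\pi\sum_k|\lambda_k|=\|\Pi u_0\|_{L^2}^2$ equivalent to $\Pi u_0\in\mathcal{H}_d:=\overline{\operatorname{span}}\{\varphi_k\}$. Your next claim is false, however: $\Pi u_0\in\mathcal{H}_d$ does not force pure point spectrum. The restriction $L_{u_0}|_{\mathcal{H}_d}$ has norm at most $|\lambda_1|$, whereas $L_{u_0}=D-T_{u_0}$ is unbounded above on $L^2_+$, so $\mathcal{H}_c=\mathcal{H}_d^{\perp}\neq\{0\}$ always. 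Nor is $\Pi u_0$ cyclic: for a pure $N$-soliton it lies in the $N$-dimensional span of the eigenfunctions.

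This matters for the reconstruction. The formula $\Pi u_0(z)=\frac{1}{2\pi i}I_+\bigl((X^*-z)^{-1}\Pi u_0\bigr)$ uses the resolvent of $X^*$ on all of $L^2_+$. Replacing it by the resolvent of the compression of $X^*$ to $\mathcal{H}_d$ (your matrix $M$) is legitimate only if $\mathcal{H}_d$ is invariant under $(X^*-z)^{-1}$, and ``$\Pi u_0\in\operatorname{span}\{\varphi_k\}$'' does not give this. Proving that invariance is the first main step of the paper. It shows that $W(z)=P_cR_0(z)P_d$ satisfies a Sylvester equation $A(z)W(z)=W(z)B(z)$, argues that $\sigma(A(i\eta))$ and $\sigma(B(i\eta))$ are disjoint for large $\eta$, deduces $W(i\eta)=0$, and continues analytically. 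As written, that separation treats $\sigma(L_d)$ as lying at or left of $\lambda_1$. In fact $\lambda_k\to0^-$, so $\sigma(L_d)$ and $\sigma(L_c)$ touch at $0$ and that step needs repair as well. Either way, invariance is an ingredient your plan cannot do without.

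Second, the step you flag as the main obstacle is left open: diagonalizing the non-normal infinite matrix $M$ and summing its eigenvector expansion. The truncation/Hurwitz plan does not say where the required uniform control would come from. The paper never diagonalizes. On the invariant subspace it obtains
$[L_d,R(z)]=-iR(z)^2+\frac{i}{2\pi}R(z)(\Pi u_0\otimes I_+^*)R(z)$, with $R(z)=(M+z)^{-1}$ and $M=-X^*|_{\mathcal{H}_d}$. Since $L_d\in\mathfrak{S}_1$ and the last term has rank one, $R(z)^2\in\mathfrak{S}_1$. It then computes the trace two ways:
\begin{itemize}
\item The commutator $[L_dP_N,P_NR(z)P_N]$ has zero trace; letting $N\to\infty$ gives $\operatorname{Tr}R(z)^2=\frac{1}{2\pi}I_+\bigl(R(z)^2\Pi u_0\bigr)=-F'(z)$, where $F(z)=\frac{1}{2\pi}I_+\bigl(R(z)\Pi u_0\bigr)$.
\item Lidskii's theorem gives $\operatorname{Tr}R(z)^2=\sum_m(z+p_m)^{-2}$, with no completeness of eigenvectors needed.
\end{itemize}
Integrating from $i\infty$ then yields $\Pi u_0=i\sum_m(z+p_m)^{-1}$. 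So your operator $-L_{u_0}P_d$ is the right raw ingredient, but its own trace $\sum_k|\lambda_k|$ says nothing about the poles. Its role is to make $[L_d,R(z)]$, and hence $R(z)^2$, trace class. Your final summability step is the same as the paper's.

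A smaller point: the commutator identity must be written with $X^*$, the adjoint of multiplication by $x$ on $L^2_+$. Indeed $x\varphi_k\notin L^2$ precisely because $I_+(\varphi_k)\neq0$. Its rank-one term is $\frac{i}{2\pi}I_+(\varphi)\,\Pi u_0$. With $\langle\varphi,\Pi u_0\rangle$ in its place, pairing with $\varphi_k$ would give $|\langle\varphi_k,\Pi u_0\rangle|^2=2\pi$. The correct value $2\pi|\lambda_k|$ comes from integrating the eigenvalue equation, which gives $\lambda_kI_+(\varphi_k)=-\langle\varphi_k,\Pi u_0\rangle$.
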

\begin{remark}
Theorem \ref{thm3} solves the open problem proposed by Gassot and G\'erard \cite{GG26}, namely Problem 4, proving that if and only if the discrete spectrum of the Lax operator $L_{u_0}$ satisfies condition \eqref{laxtezheng}, the initial data can be expressed in the form \eqref{form1}. Combined with the work of \cite{GG26}, this implies that \eqref{laxtezheng} is equivalent to the initial data taking the form \eqref{form1}, which also corresponds to the result of the distorted Plancherel formula in \cite{BGGM25}.
\end{remark}
The trace-class operators constructed in the proof of Theorem \ref{thm3} provide the foundation for estimating the error in the $L^\infty$-norm. This approach constitutes the core of Theorem \ref{thm4}, thereby enabling us to solve Problem 5 by deriving an explicit $L^\infty$ asymptotic error formula under conditions \eqref{form1} and \eqref{form2}.
\begin{theorem}[Explicit error bound in $L^\infty$-norm of the Soliton resolution for the finite-order multisoliton]\label{thm4}
Suppose the initial data $u_0(x)$ satisfies \eqref{form1} and \eqref{form2}. Then
\begin{equation}\label{minimax_bound}
	\left\| u(t, \cdot) - u_{\mathrm{sol}}(t, \cdot) \right\|_{L^\infty(\mathbb{R})}=\mathcal{O} \left(\inf_{N \ge 1} \left\{ \sum_{k=N+1}^\infty |\lambda_k| + \frac{1}{\left| t \right|} \sum_{\substack{j, k, l \le N \\ j \neq l, k \neq l}} \frac{|\lambda_k|}{|\lambda_j - \lambda_l|^2 \cdot |\lambda_k - \lambda_l|} \right\}  \right)
	 , \enspace t\to \pm \infty.
\end{equation}
where $$u_{\mathrm{sol}}(t,x) := \sum_{k=1}^\infty R_{p_k^\infty}(x - c_{p_k^\infty} t).$$ Furthermore, assume that there exist constants $C_1, C_2 > 0$ and $\alpha > 1$ such that
\begin{equation}\label{gap_condition}
	|\lambda _k|\leqslant C_1k^{-\alpha},\quad |\lambda _j-\lambda _k|\geqslant C_2|j-k|\max\mathrm{(}j,k)^{-\alpha -1},\enspace j,k\geqslant 1,\enspace j\ne k.
\end{equation}
Then
\begin{equation}\label{linfty_bound}
	\left\| u(t,\cdot )-u_{\mathrm{sol}}(t,\cdot ) \right\| _{L^{\infty}(\mathbb{R} )}=\mathcal{O} \left( \left| t \right|^{-\frac{\alpha -1}{3\alpha +4}} \right) ,\enspace t\rightarrow \pm \infty .
\end{equation}
\end{theorem}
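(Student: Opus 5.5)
We will prove Theorem \ref{thm4} by a finite-rank truncation argument built on the explicit structure of the infinite-order multisoliton. Fix $N\ge 1$ and split
\begin{equation*}
u(t)-u_{\mathrm{sol}}(t)=\bigl(u(t)-u^{N}(t)\bigr)+\bigl(u^{N}(t)-u^{N}_{\mathrm{sol}}(t)\bigr)+\bigl(u^{N}_{\mathrm{sol}}(t)-u_{\mathrm{sol}}(t)\bigr),
\end{equation*}
where $u^N$ is the exact $N$-soliton BO solution whose Lax operator has eigenvalues $\lambda_1,\dots,\lambda_N$ and the same asymptotic data $p_1^\infty,\dots,p_N^\infty$, and $u^N_{\mathrm{sol}}=\sum_{k\le N}R_{p_k^\infty}(x-c_{p_k^\infty}t)$. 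The construction of $u^N$ comes from the trace-class operators used for Theorem \ref{thm3}. Those operators realise $\Pi u_0$ through the spectral projections of $L_{u_0}$. Compressing them to the span of the first $N$ eigenfunctions gives a finite-rank operator, and by the explicit formula this finite-rank operator generates a rational $N$-soliton.

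\textbf{Tail terms.} We bound the first and third terms uniformly in $t$ by $\sum_{k>N}|\lambda_k|$. For the third term, each profile satisfies $\|R_p\|_{L^\infty}=2/\mathrm{Im}\,p$. Using the relation $\lambda_k=-1/(2\mathrm{Im}\,p_k^\infty)$ that underlies \eqref{laxtezheng}, the tail is controlled termwise by $|\lambda_k|$. For the first term, we write $u=\Pi u+\overline{\Pi u}$ and represent $\Pi u(t,x)$ via G\'erard's explicit formula as a resolvent pairing. The difference between the full and compressed resolvents is the contribution of the discarded eigenspaces. That contribution is bounded by the trace norm of the tail spectral projection weighted by $|\lambda_k|$, which by the trace identity in the proof of Theorem \ref{thm3} is $\sum_{k>N}|\lambda_k|$. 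The bound is uniform in $t$ because the BO flow conjugates $L_u$ unitarily.

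\textbf{Finite $N$-soliton interaction.} For the middle term we use the finite-dimensional explicit formula. $u^N$ is a determinant/resolvent expression in the $N\times N$ matrix $M(t)=\mathrm{diag}(p_k^\infty+c_kt)+A$, where the off-diagonal entries of $A$ are of size $1/(\lambda_j-\lambda_l)$, coming from commutator relations between $L$ and the multiplication by $x$. As $|t|\to\infty$ the diagonal separates at speeds $|c_j-c_l|\sim|\lambda_j-\lambda_l|$. A Neumann expansion of $(M(t)-x)^{-1}$ around its diagonal then gives
\begin{equation*}
\bigl|u^N-u^N_{\mathrm{sol}}\bigr|\lesssim \frac{1}{|t|}\sum_{\substack{j,k,l\le N\\ j\neq l,\,k\neq l}}\frac{|\lambda_k|}{|\lambda_j-\lambda_l|^2|\lambda_k-\lambda_l|}.
\end{equation*}
In this bound, $|\lambda_k|$ is the amplitude of the $k$th profile, one factor $|t|\,|\lambda_j-\lambda_l|$ is the separation between solitons, and the remaining factors come from the sizes of the coupling entries. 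Taking the infimum over $N$ gives \eqref{minimax_bound}. This step is the main obstacle: we must make the expansion uniform in $x$ and in $N$, which means controlling the inverse of $M(t)-x$ near each soliton centre. We would try a Schur-complement argument around the nearest diagonal entry.

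\textbf{Optimising in $N$.} Under \eqref{gap_condition}, the tail satisfies $\sum_{k>N}|\lambda_k|\lesssim N^{1-\alpha}$. The gap bound gives $|\lambda_j-\lambda_l|^{-1}\lesssim N^{\alpha+1}$ for $j,l\le N$. Summing over the triple indices, with the decay $|\lambda_k|\lesssim k^{-\alpha}$ making the $k$-sum converge up to $|k-l|$ factors, bounds the interaction sum by roughly $N^{2\alpha+5}$. Balancing $N^{1-\alpha}$ against $|t|^{-1}N^{2\alpha+5}$ is only a bookkeeping step. To reach exactly the exponent $\tfrac{\alpha-1}{3\alpha+4}$, the interaction sum must be bounded by $N^{2\alpha+3}$. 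This requires a refined count of the triple sum, using $|j-l|$ in the gap bound rather than the crude value $1$. With that bound, $N\sim|t|^{1/(3\alpha+4)}$ yields \eqref{linfty_bound}.
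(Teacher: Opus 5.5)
Your route is the paper's: truncate to the first $N$ eigenfunctions of $L_{u_0}$, write $\Pi u(t,x)=i\sum_{j,k}W_{jk}[D+E]^{-1}_{jk}$ with $D_{kk}=x-c_kt+x_k^\infty+iy_k^\infty$, Neumann-expand $[D_N+E_N]^{-1}$ about the diagonal, bound the soliton tail by $4\sum_{k>N}|\lambda_k|$, and optimise in $N$. The step that is actually wrong is the last one. The crude count you describe ($A_l=\mathcal{O}(N^{2\alpha+2})$, $B_l=\mathcal{O}(N\ln N)$, summed over $l\le N$) gives $\mathcal{E}_{\mathrm{dyn}}(N)=\mathcal{O}(N^{2\alpha+4}\ln N)$, hence $\mathcal{O}(N^{2\alpha+5})$. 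Balancing $N^{1-\alpha}$ against $|t|^{-1}N^{2\alpha+5}$ gives $|t|=N^{3\alpha+4}$, which is exactly the exponent $\frac{\alpha-1}{3\alpha+4}$; this is precisely the paper's computation. A bound $N^{2\alpha+3}$ would instead balance at $|t|=N^{3\alpha+2}$ (exponent $\frac{\alpha-1}{3\alpha+2}$). It is also false in general. Take $\lambda_k=-k^{-\alpha}$, which satisfies \eqref{gap_condition} with $C_1=1$, $C_2=\alpha$. The terms with $j=k=l-1$ alone contribute $\sum_{2\le l\le N}(l-1)^{-\alpha}|\lambda_{l-1}-\lambda_l|^{-3}\ge\alpha^{-3}\sum_{m=1}^{N-1}m^{2\alpha+3}\gtrsim N^{2\alpha+4}$. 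So drop the ``refined count'' and use the crude one with $N\sim|t|^{1/(3\alpha+4)}$.

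The other two steps are asserted rather than proved.

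For the middle term, the paper's mechanism runs as follows:
\begin{itemize}
\item $E_N$ has zero diagonal, so each entry of $(D_N^{-1}E_N)^2$ carries a factor $D_{jj}^{-1}D_{ll}^{-1}$ with $j\ne l$. That factor is $\mathcal{O}(1/|t|)$, even though $D_N^{-1}E_N$ itself is $\mathcal{O}(1)$ near a soliton centre.
\item The first-order term vanishes identically, because $W_{jk}E_{jk}=2iy_j^\infty y_k^\infty/(y_k^\infty-y_j^\infty)$ is antisymmetric.
\item The triple sum in \eqref{minimax_bound} is exactly the bound on the second-order term $W_{jk}E_{jl}E_{lk}/(D_jD_lD_k)$.
\end{itemize}
Orders $\ge 3$ are bounded there only by $C_Nt^{-1}$ with unspecified $C_N$, and convergence of the series needs $|t|\ge t_N$. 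So the uniformity in $N$ that you single out as the main obstacle is not supplied by the paper either. Something like your Schur-complement argument is what would be needed to take the infimum over $N$ legitimately.

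Your first term $u-u^N$ is not estimated in the paper at all; only the tail $\sum_{k>N}R_{p_k^\infty}$ of $u_{\mathrm{sol}}$ is. Your justification for it (trace norm of a weighted tail projection, unitary conjugation of $L_u$) does not control $I_+$ of a difference of resolvents of $X^*-2tL_{u_0}-x$ at real $x$ uniformly in $t$. In the paper's matrix form the head--tail couplings $E_{jk}$, $j\le N<k$, have size about $2\sqrt{y_j^\infty y_k^\infty}$, which grows with $k$. Any smallness must therefore come from the Schur-complement structure, not from a trace identity.
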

\begin{remark}	
	Comparing Theorem \ref{thm4} with Proposition \ref{GG26}, we provide the explicit error estimation in the $L^\infty$-norm for the case of the infinite-order multisoliton and prove that the decay rate of the error depends on the distribution of the discrete spectrum of the Lax operator $L_{u_0}$, thereby achieving significant improvements for Problem 5.
\end{remark}
\subsection{Plan of the proof}
For the finite-order multisoliton, we use the spectral analysis of the BO equation in low-regularity spaces and Kato-Rellich theorem, thus transforming the problem of soliton resolution conjecture and long-time asymptotics for the BO equation into the error estimation between the solutions evolved from the smooth approximating sequence and the low-regularity solutions. By applying truncation and interpolation methods to the distorted Fourier transform of the explicit formula, we obtain an explicit formula for the asymptotic error in the $L^\infty$-norm, relying on the H\"{o}lder continuity of the Lax operator for the BO equation in weighted $L^2(\mathbb{R})$. This stands in stark contrast to the approach in \cite{GGM26}, which relies on the classical stationary phase method and the Lebesgue theorem to derive implicit errors.

For the infinite-order multisoliton, based on the condition \eqref{laxtezheng}, we construct a class of trace-class operators and transform the inverse spectral problem associated with the Lax operator into the analysis of these operators. By employing Lidskii's theorem, we solve the open problem proposed in \cite{GG26}. Furthermore, by analyzing the time evolution of these trace-class operators, we obtain the explicit error estimation in the $L^\infty$-norm for the case of the infinite-order multisoliton.

The structure of this work is as follows. Section 2 provides the complete proof of Theorem \ref{thm1}. The proof extends the translation formula from smooth data to the $H^{0,1}$ space by means of the distorted Plancherel theorem of \cite{BGGM25}, and employs the Volterra integral equation to transform the oscillatory integral containing the phase $\mathrm{e}^{\mathrm{i}tu^2}$ into a standard Dirichlet-type integral. Then, Theorem \ref{thm2} follows by a similar argument as in the proof of Theorem \ref{thm1}, combined with the Gagliardo-Nirenberg inequality. In Section 3, we construct a class of trace-class operators and, combined with Lidskii's theorem, prove that \eqref{laxtezheng} and \eqref{form1} are equivalent. In Section 4, we analyze the time evolution of these operators in conjunction with the explicit formula. Under the condition that the discrete spectrum of the Lax operator satisfies \eqref{gap_condition}, we provide the explicit error estimation in the $L^\infty$-norm for the infinite-order multisoliton.

\begin{Acknowledgment}
	This work was supported by the National Natural Science Foundation of China under Grant No. 12371255, the Fundamental Research Funds for the Central Universities of CUMT under Grant No. 2024ZDPYJQ1003, and the Postgraduate Research \& Practice Program of Education \& Teaching Reform of CUMT under Grant No. 2025YJSJG031.
\end{Acknowledgment}

\section{Proof of Theorem \ref{thm1} and \ref{thm2}: Soliton resolution conjecture for the finite-order multisoliton}
This section presents the proof of Theorem \ref{thm1} under the assumption that $u_0\in L^2(\mathbb{R})$ is real-valued and $\langle x\rangle u_0\in L^2(\mathbb{R})$. To simplify notation, we only give the proof for $t\to+\infty$; the case $t\to-\infty$ is entirely analogous.

For $u_0\in H^{0,1}(\mathbb{R})$, by the Distorted Plancherel Theorem, we have
\begin{equation*}
	\zeta_{u_0}(\lambda):=\langle\Pi u_0,\,m_-(\cdot,\lambda;u_0)\rangle
\end{equation*}
belonging to $L^2(0,\infty)$, where $m_-(\cdot,\lambda)$ is the Jost function satisfying $L_{u_0}m_-=\lambda m_-$ and $\mathrm{e}^{-\mathrm{i}\lambda x}m_-(x,\lambda)\to1$ as $x\to-\infty$.

Define $u_\infty^+\in L^2_+(\mathbb{R})$ by
\begin{equation}\label{eq:u_infty_def}
	\widehat{u_\infty^+}(\xi):=\zeta_{u_0}(\xi)\cdot\mathbf{1}_{(0,\infty)}(\xi).
\end{equation}
By Plancherel's Theorem, $u_\infty^+$ exists and is unique.

According to \cite{GGM26}, we have
\begin{equation}\label{eq:explicit}
	\Pi u(t,z)=\Omega_t^{u_0}(\Pi u_0):=\frac{1}{2\pi\mathrm{i}}\,I_+\Big((G-2t\mathcal{L}_{u_0}-z\,\mathrm{Id})^{-1}\Pi u_0\Big),\quad \operatorname{Im}z>0,
\end{equation}
where $G$ is the adjoint of the multiplication operator by $x$ on $L^2_+(\mathbb{R})$ and $\mathcal{L}_{u_0}=D-T_{u_0}$, $D=-\mathrm{i}\partial_x$, $T_{u_0}f=\Pi(u_0 f)$, $I_+(f)=\widehat{f}(0^+)$.
By \cite{C24}, we have
\begin{lemma}\label{lem:contraction}
	For any $u_0\in L^2(\mathbb{R})$ and $t\in\mathbb{R}$, $\Omega_t^{u_0}$ on $L^2_+(\mathbb{R})$ satisfies
	\begin{equation}
		\|\Omega_t^{u_0}(f_0)\|_{L^2}\le\|f_0\|_{L^2},\quad\forall f_0\in L^2_+(\mathbb{R}).
	\end{equation}
\end{lemma}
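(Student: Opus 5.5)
The plan is to deduce the bound from the explicit formula \eqref{eq:explicit} combined with $L^2$ conservation for the Benjamin--Ono flow. No operator-theoretic analysis of the resolvent is needed.

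\emph{Step 1 (smooth data).} Take $f_0 = \Pi u_0$ with $u_0$ real-valued and smooth, say in $H^\infty(\mathbb{R})$. By the explicit formula \eqref{eq:explicit}, $\Omega_t^{u_0}(\Pi u_0) = \Pi u(t)$, where $u(t)$ is the real-valued solution of \eqref{problem}. Two facts about real functions then give the identity.
\begin{itemize}
\item For real $v$, $\widehat v(-\xi) = \overline{\widehat v(\xi)}$. Hence $\|\Pi v\|_{L^2}^2 = \frac12 \|v\|_{L^2}^2$ under the normalization in which the Szeg\H{o} projector $\Pi$ keeps positive frequencies.
\item The mass $\|u(t)\|_{L^2}$ is conserved along the flow.
\end{itemize}
Together these give the exact identity $\|\Omega_t^{u_0}(\Pi u_0)\|_{L^2} = \|\Pi u_0\|_{L^2}$, with equality rather than inequality. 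This alone does not bound $\Omega_t^{u_0}$ on arbitrary $f_0$, since the map is linear in $f_0$ but the identity only concerns the particular vector $\Pi u_0$.

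\emph{Step 2 (linear structure in $f_0$).} For a fixed potential $u_0$, the map $f_0 \mapsto \Omega_t^{u_0}(f_0)$ is linear. I will use the representation $\Omega_t^{u_0} = \Pi\, U(t)$ conjugated by the Lax flow, in the spirit of \cite{C24}. Concretely, let $U(t)$ be the unitary family on $L^2_+$ solving $\partial_t U = B_{u(t)} U$ with $B$ skew-adjoint, so that $L_{u(t)} = U(t) L_{u_0} U(t)^*$. The explicit formula then factors as
\begin{equation*}
\Omega_t^{u_0} f_0 = \frac{1}{2\pi \mathrm{i}} I_+\Big( (G - z)^{-1} U(t)\, e^{-\mathrm{i}tL_{u_0}^2} f_0 \Big),
\end{equation*}
whose right-hand side is simply $\big(U(t) e^{-\mathrm{i}tL_{u_0}^2} f_0\big)(z)$, by the standard identity expressing a Hardy function through $I_+\big((G-z)^{-1}\,\cdot\,\big)$. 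The factor $e^{-\mathrm{i}tL_{u_0}^2}$ is unitary because $L_{u_0}$ is self-adjoint, and $U(t)$ is unitary. Hence $\Omega_t^{u_0}$ is unitary, in particular a contraction, whenever $u_0$ is smooth.

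\emph{Step 3 (density).} For a general real $u_0 \in L^2$ and a general $f_0 \in L^2_+$, approximate $u_0$ by smooth real functions $u_0^n \to u_0$ in $L^2$. For fixed $z$ with $\operatorname{Im} z > 0$ and fixed $t$, the resolvent $(G - 2tL_{u_0^n} - z)^{-1}$ converges strongly to $(G - 2tL_{u_0} - z)^{-1}$; I expect to justify this from the relative boundedness of $T_{u}$ with respect to $G$ via Kato--Rellich. It follows that $\Omega_t^{u_0^n} f_0(z) \to \Omega_t^{u_0} f_0(z)$ pointwise in $z$. The uniform bound $\|\Omega_t^{u_0^n} f_0\|_{L^2} \le \|f_0\|_{L^2}$ then gives weak compactness in $L^2_+$, and lower semicontinuity of the norm under weak limits yields the contraction estimate for $u_0$.

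\emph{Main obstacle.} The hardest point is Step 2: making the factorization rigorous, in particular that the generalized wave operator intertwines $G$ with $G - 2tL$. I would first try to differentiate $(G - 2tL_{u(t)} - z)^{-1}$ in $t$ along the Lax flow, using the identity $[G, L_u] = \mathrm{i}\,\mathrm{Id} + (\text{rank one})$ and the fact that $I_+$ annihilates the rank-one correction. The density step in Step 3 should be routine by comparison.
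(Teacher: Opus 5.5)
\textbf{Comparison with the paper.} The paper gives no argument for this lemma; it imports the bound from \cite{C24}. Your route is different and more informative. You run G\'erard's Lax-pair conjugation with an arbitrary $f_0$ in place of $\Pi u_0$. This identifies $\Omega_t^{u_0}$, for regular $u_0$, with the unitary $V(t)=U(t)\mathrm{e}^{\mathrm{i}tL_{u_0}^2}$. (This is the sign matching the paper's convention and the factor $\mathrm{e}^{\mathrm{i}t\lambda^2}$ in \eqref{eq:smooth_trans}, not $\mathrm{e}^{-\mathrm{i}tL_{u_0}^2}$.) It gives the isometry $\|\Omega_t^{u_0}f_0\|_{L^2}=\|f_0\|_{L^2}$ for every $f_0$, and your weak-compactness step transfers the inequality to $L^2$ data. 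The citation buys brevity and, above all, the well-definedness of $\Omega_t^{u_0}$ and its continuity in $u_0$ for rough data, which your Step 3 also needs. Step 1 plays no role and can be dropped.

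\textbf{Step 2 needs a second identity.} The factorization rests on two identities, and you only flag one. Set $B_u=\mathrm{i}(T_{|D|u}-T_u^2)$ and $\mathcal{A}_u:=B_u+\mathrm{i}L_u^2$, so that $\partial_t\Pi u=\mathcal{A}_u\Pi u$ and $V'=\mathcal{A}_{u(t)}V$. You need:
\begin{enumerate}[(a)]
\item $[G,\mathcal{A}_u]=-2L_u$, which yields $V(t)^*GV(t)=G-2tL_{u_0}$. The rank-one terms from $[G,T_b]f=\frac{\mathrm{i}}{2\pi}I_+(f)\Pi b$ inside $[G,B_u]$ and $\mathrm{i}[G,L_u^2]$ cancel up to $\frac{1}{2\pi}I_+(Df)\,\Pi u$, which vanishes because $I_+(Df)=0$. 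The mechanism is not that $I_+$ kills the rank-one part of $[G,L_u]$: $I_+(\Pi u)=\hat u(0)$ is nonzero in general.
\item $I_+(\mathcal{A}_uh)=0$, i.e.\ $I_+(B_uh)=-\mathrm{i}I_+(L_u^2h)$. This follows from $\int(|D|u)h=\int u\,Dh$ for $h\in L^2_+$, and it gives $I_+\circ V(t)=I_+$.
\end{enumerate}
Without (b), identity (a) only produces $\frac{1}{2\pi\mathrm{i}}I_+\bigl(V^*(G-z)^{-1}Vf_0\bigr)$, not $(Vf_0)(z)$.

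\textbf{Step 3 as justified does not work.} Kato--Rellich relative to $G$ fails: $T_u$ is not $G$-bounded, since $\hat f\in H^1(0,\infty)$ does not force $f\in L^\infty$. Moreover, strong resolvent convergence in $L^2_+$ would not give pointwise convergence of $\Omega_t^{u_0^n}f_0(z)$, because $I_+$ is unbounded on $L^2_+$.

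\textbf{A repair for Step 3.} Perturb from $G-2tD=\mathrm{e}^{-\mathrm{i}tD^2}G\,\mathrm{e}^{\mathrm{i}tD^2}$ instead of $G$. For $t\neq0$, a stationary-phase bound on $\mathrm{e}^{-\mathrm{i}tD^2}g$ with $\hat g\in H^1(0,\infty)$ shows that $2tT_u$ has relative bound zero with respect to $G-2tD$. The resolvents then converge in $\mathcal{B}(L^2_+,\mathrm{Dom}(G-2tD))$ with the graph norm. On that space $I_+$ is bounded, because $I_+(\mathrm{e}^{-\mathrm{i}tD^2}g)=I_+(g)$. Alternatively, take this continuity from \cite{C24}, as the paper does in the proof of Lemma~\ref{lem:translation}.
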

The expression \eqref{eq:explicit} can be reduced to $\Pi u_0=\sum_{j=1}^N c_j\varphi_j+f_c$, where $\varphi_j$ are the eigenfunctions corresponding to the discrete spectrum, and $f_c\in\mathcal{H}_c(L_{u_0})$ is the continuous spectrum component.

For $\Pi u_c(t)=\Omega_t^{u_0}(f_c)$, by \cite{KLV24}, we have $\|u(t)\|_{L^2}=\|u_0\|_{L^2}$. Moreover, we have $\|\Pi u(t)\|_{L^2}^2=\frac12\|u(t)\|_{L^2}^2$. By the Lax pair structure, $L_{u(t)}$ is unitarily equivalent to $L_{u_0}$. Applying Theorem~26 of \cite{BGGM25} to $u(t)$ and $u_0$ respectively, we obtain
\begin{equation*}
	\|\Pi u(t)\|_{L^2}^2=\sum_{j=1}^N 2\pi|\lambda_j|+\frac{1}{2\pi}\|\zeta_{u(t)}\|_{L^2(0,\infty)}^2,
	\qquad
	\|\Pi u_0\|_{L^2}^2=\sum_{j=1}^N 2\pi|\lambda_j|+\frac{1}{2\pi}\|\zeta_{u_0}\|_{L^2(0,\infty)}^2.
\end{equation*}
Subtracting the two equations yields $\|\zeta_{u(t)}\|_{L^2}=\|\zeta_{u_0}\|_{L^2}$. By the unitarity of the Distorted Fourier Transform on the subspace, we obtain
\begin{equation}\label{eq:mass_cons}
	\|\Omega_t^{u_0}(f_c)\|_{L^2}=\|\Pi u_c(t)\|_{L^2}=\frac{1}{\sqrt{2\pi}}\|\zeta_{u(t)}\|_{L^2}
	=\frac{1}{\sqrt{2\pi}}\|\zeta_{u_0}\|_{L^2}=\|f_c\|_{L^2}=\|u_\infty^+\|_{L^2}.
\end{equation}

Next, we prove that the following weak limit holds
\begin{equation}\label{eq:weak_goal}
	\lim_{t\to\infty}\langle\Omega_t^{u_0}(f_c),\,U_0(t)h\rangle=\langle u_\infty^+,\,h\rangle,\quad\forall h\in L^2_+(\mathbb{R}),
\end{equation}
where $U_0(t)=\mathrm{e}^{t\partial_x|D|}$, whose Fourier symbol on $L^2_+$ is $\mathrm{e}^{\mathrm{i}t\xi^2}$?$\xi>0$).

Take test functions $h\in\mathcal{S}_+(\mathbb{R})$ such that $\hat{h}$ is compactly supported in $[a,b]\subset(0,\infty)$?$a>0$?, and take
\begin{equation*}
	f_\psi:=\int_0^\infty\psi(\lambda)\,m_-(\cdot,\lambda;u_0)\,\frac{\mathrm{d}\lambda}{2\pi},\quad\psi\in C_c^\infty(0,\infty),\;\operatorname{supp}\psi\subset[c,d]\;(c>0)
\end{equation*}
to approximate $f_c$.

Let $u_n\in\mathcal{S}(\mathbb{R})$ converge strongly to $u_0$ in $H^{0,1}(\mathbb{R})$. For the smooth sequence $u_n$, \cite{GGM26} proved that
\begin{equation}\label{eq:smooth_trans}
	J_t^n(x):=\Omega_t^{u_n}(f_\psi^n)(x)=\int_0^\infty\psi(\lambda)\,m_-^n(x,\lambda)\,\mathrm{e}^{\mathrm{i}t\lambda^2}\,\frac{\mathrm{d}\lambda}{2\pi},
\end{equation}
where $f_\psi^n:=\int_0^\infty\psi(\lambda)\,m_-^n(\cdot,\lambda)\,\frac{\mathrm{d}\lambda}{2\pi}$. In the sequel, we extend this identity to $H^{0,1}$.

\begin{lemma}\label{lem:fredholm}
	If $u_n\to u_0$ in $H^{0,1}(\mathbb{R})$, then for each $\lambda>0$, $m_-^n(\cdot,\lambda)\to m_-(\cdot,\lambda)$ uniformly in $L^\infty_x(\mathbb{R})$.
\end{lemma}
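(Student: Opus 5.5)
Write $m_-(x,\lambda)=\mathrm{e}^{\mathrm{i}\lambda x}M(x,\lambda)$, so that $M(x,\lambda)\to1$ as $x\to-\infty$. The plan is to recast the Jost equation $L_{u_0}m_-=\lambda m_-$ as a Fredholm integral equation of the second kind on $L^\infty$ (more precisely, on bounded functions with the Hardy-space structure built in). Following \cite{BGGM25}, the Jost solution satisfies
\begin{equation*}
M(\cdot,\lambda)=1+K_{u_0,\lambda}M(\cdot,\lambda),\qquad (K_{u,\lambda}f)(x)=\int_{\mathbb{R}}k_\lambda(x-y)\,u(y)f(y)\,\mathrm{d}y,
\end{equation*}
where $k_\lambda$ is the kernel of $\mathrm{e}^{-\mathrm{i}\lambda x}(D-\lambda-\mathrm{i}0)^{-1}\Pi\,\mathrm{e}^{\mathrm{i}\lambda x}$. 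For $\lambda>0$ this kernel is bounded, $|k_\lambda(z)|\lesssim 1$, and it decays like $|z|^{-1}$ away from the origin. The argument then has three steps.

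\textbf{Step 1: mapping properties of $K_{u,\lambda}$.} I will show that $K_{u,\lambda}$ is bounded from $L^\infty$ to $L^\infty$ with
\begin{equation*}
\|K_{u,\lambda}\|_{L^\infty\to L^\infty}\lesssim_\lambda \|u\|_{L^1}\lesssim \|\langle x\rangle u\|_{L^2}\le\|u\|_{H^{0,1}},
\end{equation*}
where the middle inequality is Cauchy--Schwarz. Since the bound is linear in $u$, the map $u\mapsto K_{u,\lambda}$ is Lipschitz from $H^{0,1}$ into bounded operators on $L^\infty$. In particular $\|K_{u_n,\lambda}-K_{u_0,\lambda}\|\to0$.

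\textbf{Step 2: compactness.} The next task is to show that $K_{u_0,\lambda}$ is compact on a suitable space: either $C_b$ with limits at $\pm\infty$, or $L^\infty$ once $u$ is approximated by $C_c$ functions. I would proceed by approximation. First take $u\in C_c^\infty$; then $K_{u,\lambda}$ maps bounded sets into equicontinuous families with controlled behaviour at infinity, so Arzel\`a--Ascoli gives compactness. For general $u_0\in H^{0,1}$ the operator $K_{u_0,\lambda}$ is a norm limit of such operators, hence compact.

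\textbf{Step 3: invertibility and conclusion.} Invertibility of $I-K_{u_0,\lambda}$ for each $\lambda>0$ is equivalent to the existence and uniqueness of the Jost solution, which is exactly what is used in the distorted Plancherel theorem of \cite{BGGM25} for $u_0\in H^{0,1}$. In the other direction, a nontrivial kernel element would produce either an embedded eigenvalue or a threshold resonance at $\lambda>0$, and this is excluded in that setting. Once $I-K_{u_0,\lambda}$ is invertible, norm convergence of $K_{u_n,\lambda}$ to $K_{u_0,\lambda}$ gives invertibility of $I-K_{u_n,\lambda}$ for large $n$ by a Neumann series, together with uniformly bounded inverses. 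The resolvent identity then yields
\begin{equation*}
\|M^n-M\|_{L^\infty}\le\|(I-K_{u_n,\lambda})^{-1}\|\,\|K_{u_n,\lambda}-K_{u_0,\lambda}\|\,\|M\|_{L^\infty}\lesssim\|u_n-u_0\|_{H^{0,1}}\to0.
\end{equation*}
Multiplying by the unimodular factor $\mathrm{e}^{\mathrm{i}\lambda x}$ gives uniform convergence of $m_-^n(\cdot,\lambda)$ to $m_-(\cdot,\lambda)$ in $L^\infty_x(\mathbb{R})$.

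\textbf{Main obstacle.} I expect the hardest part to be Step 2 together with the injectivity claim in Step 3. The kernel $k_\lambda$ only decays like $1/|z|$, so compactness needs care at spatial infinity, where the $L^1$ bound on $u$ has to compensate. Injectivity requires the absence of positive embedded eigenvalues, which I would import from the distorted Plancherel theory of \cite{BGGM25} rather than reprove.
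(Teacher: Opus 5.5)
Your proposal follows the paper's route. Both prove operator-norm Lipschitz dependence of $u\mapsto K_{u,\lambda}$ from $H^{0,1}$ into $\mathcal{B}(L^\infty)$, take invertibility at $u_0$ from \cite{BGGM25}, and use stability of inverses; your identity $M^n-M=(I-K_{u_n,\lambda})^{-1}(K_{u_n,\lambda}-K_{u_0,\lambda})M$ is what the paper's last sentence means. There are two differences.
\begin{itemize}
\item The paper gets the Lipschitz bound from Lemma~29 of \cite{BGGM25}, via the splitting $T_um=g_{u,m}+c_{u,m}/(y-\mathrm{i})$ with $g_{u,m}\in L^1$, rather than from pointwise kernel bounds. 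Your kernel route is a self-contained alternative once it is corrected as below.
\item The paper cites bijectivity of $1-\mathrm{i}K_{u_0,\lambda}$ on $L^\infty$ directly (Proposition~30 there), so your Step~2 is not needed. It is only needed if you import mere uniqueness of the Jost solution, which gives injectivity but not invertibility and must then be upgraded through the Fredholm alternative.
\end{itemize}

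There is one real error: your description of $k_\lambda$, on which Step~1 rests. With the paper's Fourier convention the multiplier is $\mathbf{1}_{\xi>-\lambda}(\xi-\mathrm{i}0)^{-1}$, so
\[
k_\lambda(z)=\mathrm{i}H(z)+\frac{1}{2\pi}\int_\lambda^\infty\frac{\mathrm{e}^{-\mathrm{i}z\eta}}{\eta}\,\mathrm{d}\eta .
\]
\begin{itemize}
\item The first term is the Volterra part $\int_{-\infty}^x$ of the paper's $K_{u,\lambda}$, and it does not decay as $z\to+\infty$.
\item The second term is an exponential integral. It is $O(1/|z|)$ at infinity but logarithmically singular at $z=0$, because at high frequency the multiplier behaves like $1/\xi$ on one side only.
\end{itemize}
So $k_\lambda$ is unbounded, and the bound $\|K_{u,\lambda}\|_{L^\infty\to L^\infty}\lesssim\|u\|_{L^1}$ is false. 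For example, take a nonnegative $u\in L^1$ supported near $0$ with $\int u(y)\log(1/|y|)\,\mathrm{d}y=\infty$; then $K_{u,\lambda}1\notin L^\infty$.

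The repair costs nothing under your hypotheses. One has $|k_\lambda(z)|\lesssim_\lambda 1+\log^+(1/|z|)$ and $\log^+(1/|\cdot|)\in L^2$, so $\|K_{u,\lambda}\|_{\mathcal{B}(L^\infty)}\lesssim_\lambda\|u\|_{L^1}+\|u\|_{L^2}\lesssim\|u\|_{H^{0,1}}$, which is all Step~3 uses.

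Step~2 needs a similar adjustment. For compactly supported $u$, the functions $K_{u,\lambda}f$ vanish at $-\infty$ but tend to $\mathrm{i}\int uf$ at $+\infty$, not to $0$. The Arzel\`a--Ascoli argument therefore has to be run on $[-\infty,+\infty]$, where it still goes through.
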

\begin{proof}
	By Section~7 of \cite{BGGM25}, we have $m_-=(1-\mathrm{i}K_{u,\lambda})^{-1}(\mathrm{e}^{\mathrm{i}\lambda\cdot})$, where $K_{u,\lambda}:L^\infty\to L^\infty$ is given by $K_{u,\lambda}m(x)=\int_{-\infty}^x\mathrm{e}^{\mathrm{i}\lambda(x-y)}T_u m(y)\,\mathrm{d}y$. By Lemma~29 of \cite{BGGM25}, we have $T_u m=g_{u,m}+\frac{c_{u,m}}{y-\mathrm{i}}$, where $g_{u,m}\in L^1$ and $c_{u,m}\in\mathbb{C}$ are controlled by $\|u\|_{H^{0,1}}\|m\|_{L^\infty}$. Consequently,
	\begin{equation*}
		\|K_{u_n,\lambda}-K_{u_0,\lambda}\|_{\mathcal{B}(L^\infty)}\le C\|u_n-u_0\|_{H^{0,1}}\to0.
	\end{equation*}
	By the bijectivity of $(1-\mathrm{i}K_{u_0,\lambda})$ on $L^\infty$ (see Proposition~30 of \cite{BGGM25}), we have $(1-\mathrm{i}K_{u_n,\lambda})^{-1}\to(1-\mathrm{i}K_{u_0,\lambda})^{-1}$, hence $m_-^n\to m_-$ uniformly in $L^\infty_x$.
\end{proof}

\begin{lemma}\label{lem:translation}
	For $u_0\in H^{0,1}(\mathbb{R})$, in the sense of weak convergence in $L^2_+(\mathbb{R})$,
	\begin{equation}\label{eq:translation}
		\Omega_t^{u_0}(f_\psi)(x)=\int_0^\infty\psi(\lambda)\,m_-(x,\lambda)\,\mathrm{e}^{\mathrm{i}t\lambda^2}\,\frac{\mathrm{d}\lambda}{2\pi}.
	\end{equation}
\end{lemma}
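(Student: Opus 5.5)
The plan is to pass to the limit $n\to\infty$ in the smooth identity \eqref{eq:smooth_trans}, testing both sides against a fixed $h\in L^2_+(\mathbb{R})$ (first taking $h$ in a dense class such as $\mathcal{S}_+$ with $\hat h$ compactly supported, then extending by density). Since Lemma~\ref{lem:contraction} makes every $\Omega_t^{u}$ a contraction on $L^2_+$, the density extension is automatic once both sides are uniformly bounded in $L^2$. We will identify the weak limit of each side separately, and then conclude from uniqueness of weak limits.

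\emph{Right-hand side.} By Lemma~\ref{lem:fredholm}, $m_-^n(\cdot,\lambda)\to m_-(\cdot,\lambda)$ in $L^\infty_x$ for each $\lambda>0$. The proof of that lemma also gives a bound on $\|(1-\mathrm{i}K_{u_n,\lambda})^{-1}\|_{\mathcal{B}(L^\infty)}$ that is uniform for $\lambda$ in the compact set $[c,d]$ and for large $n$. To see this, observe that the map $\lambda\mapsto K_{u_0,\lambda}$ is norm-continuous and the inverse exists at every point, so the bound follows by compactness. Hence $\sup_{\lambda\in[c,d]}\|m_-^n-m_-\|_{L^\infty}\to0$. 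Fix $h$ with $\hat h\in C_c^\infty$. Using Fubini, write $\langle J_t^n,h\rangle=\int\psi(\lambda)e^{\mathrm{i}t\lambda^2}\langle m_-^n(\cdot,\lambda),h\rangle\frac{d\lambda}{2\pi}$; here $h\in L^1$ makes each pairing finite. Dominated convergence then gives convergence to the same expression with $m_-$ in place of $m_-^n$. The same argument applied to $f_\psi^n$ shows that $f_\psi^n\to f_\psi$ weakly. To obtain $L^2$ bounds, I would use the distorted Plancherel theorem (Theorem~26 of \cite{BGGM25}): it gives $\|f_\psi^n\|_{L^2}\lesssim\|\psi\|_{L^2}$ and the same bound for the $\lambda$-integral with the phase $e^{\mathrm{i}t\lambda^2}$ inserted. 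This is because these integrals are inverse distorted Fourier transforms of $\psi$ and $\psi e^{\mathrm{i}t\lambda^2}$, and the isometry constant is independent of $n$. Consequently, the right-hand side of \eqref{eq:translation} is a genuine element of $L^2_+$, and it is the weak limit of $J_t^n$.

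\emph{Left-hand side.} We must show $\Omega_t^{u_n}(f_\psi^n)\rightharpoonup\Omega_t^{u_0}(f_\psi)$. I split
\[
\Omega_t^{u_n}(f_\psi^n)-\Omega_t^{u_0}(f_\psi)=\Omega_t^{u_n}(f_\psi^n-f_\psi)+\big(\Omega_t^{u_n}-\Omega_t^{u_0}\big)f_\psi .
\]
The first term is the main obstacle, because weak convergence of $f_\psi^n$ is not preserved by a varying operator. I would therefore upgrade it to strong convergence $\|f_\psi^n-f_\psi\|_{L^2}\to0$. The first thing to try is the isometry: we have $\|f_\psi^n\|_{L^2}^2=\frac1{2\pi}\|\psi\|_{L^2}^2=\|f_\psi\|_{L^2}^2$ by distorted Plancherel, since $\psi$ is supported away from the discrete spectrum and $f_\psi^n$ lies in the continuous subspace. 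Together with weak convergence, this equality of norms gives strong convergence, and Lemma~\ref{lem:contraction} then makes the first term tend to $0$ in norm.

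For the second term, I use \eqref{eq:explicit}: $\Omega_t^{u}(f)=\frac1{2\pi\mathrm{i}}I_+\big((G-2t\mathcal{L}_u-z)^{-1}f\big)$. By the resolvent identity, the difference is $2t\,(G-2t\mathcal{L}_{u_n}-z)^{-1}T_{u_0-u_n}(G-2t\mathcal{L}_{u_0}-z)^{-1}f_\psi$. Since $T_{u_n-u_0}$ has norm going to zero as an operator $H^{1/2}_+\to H^{-1/2}_+$ (because $u_n\to u_0$ in $L^2$), and the resolvents are bounded uniformly in $n$ for fixed $\operatorname{Im}z>0$, we get $\Pi u$-type values converging pointwise in $z\in\mathbb{C}_+$. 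Pointwise convergence of the holomorphic extensions, combined with the uniform $L^2$ bound from Lemma~\ref{lem:contraction}, yields weak convergence in $L^2_+$. This holds because the Cauchy kernels $(x-\bar z)^{-1}$ span a dense subspace of $L^2_+$. Equating the two weak limits gives \eqref{eq:translation}.
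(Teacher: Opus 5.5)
Your argument is essentially the paper's. You pass to the limit $n\to\infty$ in \eqref{eq:smooth_trans}, upgrade $f_\psi^n\rightharpoonup f_\psi$ to strong convergence via the distorted Plancherel norm identity and Radon--Riesz, and remove $\Omega_t^{u_n}(f_\psi^n-f_\psi)$ with Lemma~\ref{lem:contraction}. You then get $(\Omega_t^{u_n}-\Omega_t^{u_0})f_\psi\rightharpoonup0$ from pointwise convergence on $\mathbb{C}_+$ plus density of Cauchy kernels, and identify the limit of $J_t^n$ from the convergence of $m_-^n$.

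\textbf{The genuine gap is the input.} The identity \eqref{eq:smooth_trans}, which you take from the paper, already fails for Schwartz data, and so does \eqref{eq:translation}.

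\emph{The explicit-formula operator is not $\mathrm{e}^{\mathrm{i}t\mathcal{L}_{u_0}^2}$.} Let $B_u=\mathrm{i}(T_{|D|u}-T_u^2)$ and $U'(t)=B_{u(t)}U(t)$, $U(0)=\mathrm{Id}$. Then $U(t)^*\mathcal{L}_{u(t)}U(t)=\mathcal{L}_{u_0}$ and $\Pi u(t)=U(t)\mathrm{e}^{\mathrm{i}t\mathcal{L}_{u_0}^2}\Pi u_0$. From $[G,D]=\mathrm{i}$ and $[G,T_b]f=\frac{\mathrm{i}}{2\pi}I_+(f)\Pi b$ one gets
\[
[G,\,B_u+\mathrm{i}\mathcal{L}_u^2]=-2\mathcal{L}_u,\qquad I_+\circ(B_u+\mathrm{i}\mathcal{L}_u^2)=0 .
\]
Hence $V(t):=U(t)\mathrm{e}^{\mathrm{i}t\mathcal{L}_{u_0}^2}$ satisfies $V^*GV=G-2t\mathcal{L}_{u_0}$ and $I_+\circ V=I_+$. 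Therefore $\Omega_t^{u_0}(f)(z)=\frac{1}{2\pi\mathrm{i}}I_+((G-z)^{-1}Vf)=(Vf)(z)$ for \emph{every} $f\in L^2_+$, that is, $\Omega_t^{u_0}=U(t)\mathrm{e}^{\mathrm{i}t\mathcal{L}_{u_0}^2}$.

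\emph{This contradicts the claimed identity.} The right-hand side of \eqref{eq:translation} is exactly $\mathrm{e}^{\mathrm{i}t\mathcal{L}_{u_0}^2}f_\psi$, because $\mathcal{L}_{u_0}m_-=\lambda m_-$. So the claim forces $U(t)$ to fix $\mathrm{e}^{\mathrm{i}t\mathcal{L}_{u_0}^2}f_\psi$, and differentiating at $t=0$ gives $B_{u_0}f_\psi=0$ for all $\psi$. This is false: for $u_0=\epsilon u_1$,
\[
B_{u_0}f_\psi=\mathrm{i}\epsilon\,T_{|D|u_1}\mathcal{F}^{-1}\psi+\mathcal{O}(\epsilon^2).
\]
Its Fourier transform on $\xi>0$ is $\frac{\mathrm{i}\epsilon}{2\pi}\int|\xi-\lambda|\,\widehat{u_1}(\xi-\lambda)\psi(\lambda)\,\mathrm{d}\lambda+\mathcal{O}(\epsilon^2)$, which is nonzero for generic $u_1,\psi$.

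\emph{What holds instead.} The correct identity is $\Omega_t^{u_0}(f_\psi)=U(t)\int_0^\infty\psi(\lambda)\mathrm{e}^{\mathrm{i}t\lambda^2}m_-(\cdot,\lambda)\frac{\mathrm{d}\lambda}{2\pi}$. To first order in $u_0$, this is the same integral built on the Jost functions of $u(t)$, not of $u_0$. So the two weak limits you identify are different functions, and no approximation argument in $n$ can close the proof.

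\textbf{Two auxiliary steps would also need repair, even for a corrected statement.}

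First, $\lambda\mapsto K_{u_0,\lambda}$ is not norm-continuous on $L^\infty$. For large $x$, $(K_{u,\lambda}-K_{u,\lambda'})m(x)\approx\mathrm{e}^{\mathrm{i}\lambda x}\widehat{T_um}(\lambda)-\mathrm{e}^{\mathrm{i}\lambda' x}\widehat{T_um}(\lambda')$, whose supremum does not tend to $0$. Conjugate by $\mathrm{e}^{\mathrm{i}\lambda x}$ first: the difference then involves only a frequency band of width $|\lambda-\lambda'|$ away from $0$, which is small since $u_0\in L^1$. Only then use compactness of $[c,d]$.

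Second, in the resolvent-identity step, smallness of $T_{u_n-u_0}$ in $\mathcal{B}(H^{1/2}_+,H^{-1/2}_+)$ is useless with only $L^2$ bounds on $(G-2t\mathcal{L}_u-z)^{-1}$. That resolvent gains no regularity, already for $u=0$. What you need instead is, e.g., $g:=(G-2t\mathcal{L}_{u_0}-z)^{-1}f_\psi\in L^\infty$, so that $\|T_{u_n-u_0}g\|_{L^2}\le\|u_n-u_0\|_{L^2}\|g\|_{L^\infty}$. Combine this with the uniform bound $|I_+((G-2t\mathcal{L}_{u_n}-z)^{-1}h)|^2\le4\pi\|h\|_{L^2}^2/\operatorname{Im}z$. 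The paper simply cites \cite{C24} at this point.
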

\begin{proof}
	By Lemma~\ref{lem:fredholm}, $m_-^n$ converges uniformly in $L^\infty_x$, hence for any $v\in C_c^\infty(\mathbb{R})$, $\langle f_\psi^n,v\rangle\to\langle f_\psi,v\rangle$, i.e., $f_\psi^n\rightharpoonup f_\psi$ weakly in $L^2_+$. By the Distorted Plancherel Isometry, $\|f_\psi^n\|_{L^2}=\|\psi\|_{L^2}/\sqrt{2\pi}=\|f_\psi\|_{L^2}$ is conserved. By the Radon-Riesz Theorem, weak convergence together with norm conservation implies $\|f_\psi^n-f_\psi\|_{L^2_+}\to0$.
	
	By Lemma~\ref{lem:contraction}, we obtain $\|\Omega_t^{u_n}(f_\psi^n-f_\psi)\|_{L^2}\le\|f_\psi^n-f_\psi\|_{L^2}\to0$. Moreover, by \cite{C24}, we have $\Omega_t^{u_n}(f_\psi)(z)\to\Omega_t^{u_0}(f_\psi)(z)$ pointwise in $\mathbb{C}_+$. Since the linear span of $\{\frac{1}{x-\bar{z}}\}_{z\in\mathbb{C}_+}$ is dense in $L^2_+$, by the Banach-Steinhaus Theorem, $(\Omega_t^{u_n}-\Omega_t^{u_0})f_\psi\rightharpoonup0$ in $L^2_+$. In summary, $\Omega_t^{u_n}(f_\psi^n)\rightharpoonup\Omega_t^{u_0}(f_\psi)$.
	
	By the uniform convergence of $m_-^n$, the right-hand side integral of \eqref{eq:smooth_trans} converges uniformly in $L^\infty_x$ to $$J_t^0(x):=\int_0^\infty\psi(\lambda)m_-(x,\lambda)\mathrm{e}^{\mathrm{i}t\lambda^2}\frac{\mathrm{d}\lambda}{2\pi}.$$ This means that the $L^2_+$ weak limit of $J_t^n$ coincides almost everywhere with its $L^\infty$ limit, hence \eqref{eq:translation} holds.
\end{proof}

Substituting $m_-(x,\lambda)=\mathrm{e}^{\mathrm{i}\lambda x}+\rho(x,\lambda)$ into \eqref{eq:translation} yields $\mathcal{F}^{-1}(\psi(\xi)\mathrm{e}^{\mathrm{i}t\xi^2})=U_0(t)u_\infty^+[\psi]$, with the remainder
\begin{equation}\label{eq:I_rho}
	\mathcal{I}_\rho(t):=\langle\Omega_t^{u_0}(f_\psi)_{\mathrm{rem}},\,U_0(t)h\rangle
	=\int_a^b\mathrm{d}\xi\,\overline{\hat{h}(\xi)}\,\mathrm{e}^{-\mathrm{i}t\xi^2}
	\int_0^\infty\frac{\mathrm{d}\lambda}{2\pi}\,\psi(\lambda)\,\hat{\rho}(\xi,\lambda)\,\mathrm{e}^{\mathrm{i}t\lambda^2}.
\end{equation}
From $\mathrm{e}^{-\mathrm{i}\lambda x}m_-(x,\lambda)\to1\;(x\to-\infty)$, the Volterra integral equation for $\rho$ is
\begin{equation}\label{eq:volterra}
	\rho(x,\lambda)=\mathrm{i}\int_{-\infty}^x\mathrm{e}^{\mathrm{i}\lambda(x-y)}g_\lambda(y)\,\mathrm{d}y,\quad g_\lambda:=\Pi(u_0 m_-)\in L^2_x.
\end{equation}
Equivalently, the above is the convolution of $g_\lambda$ with the function $K(x)=\mathrm{i}H(x)\mathrm{e}^{\mathrm{i}\lambda x}$, where $H$ is the Heaviside step function.

In the class of tempered distributions $\mathcal{S}'(\mathbb{R})$, $\mathcal{F}(H)(\xi)=\pi\delta(\xi)-\mathrm{i}\,\mathrm{P.V.}\frac{1}{\xi}$, hence
\begin{equation*}
	\mathcal{F}(H(x)\mathrm{e}^{\mathrm{i}\lambda x})(\xi)=\pi\delta(\xi-\lambda)-\mathrm{i}\,\mathrm{P.V.}\frac{1}{\xi-\lambda}.
\end{equation*}
This means $\widehat{K}(\xi)=\mathrm{P.V.}\frac{1}{\xi-\lambda}+\mathrm{i}\pi\delta(\xi-\lambda)$. Consequently,
\begin{equation}\label{eq:pole_dist}
	\hat{\rho}(\xi,\lambda)=\widehat{K}(\xi)\,\hat{g}_\lambda(\xi)
	=\Big(\mathrm{P.V.}\frac{1}{\xi-\lambda}+\mathrm{i}\pi\delta(\xi-\lambda)\Big)\hat{g}_\lambda(\xi).
\end{equation}

We examine the regularity of $\hat{g}_\lambda(\xi)=\mathbf{1}_{\xi>0}\widehat{u_0 m_-}(\xi)$ with respect to $\xi$. We have
\begin{equation*}
	\partial_\xi\hat{g}_\lambda(\xi)=\mathbf{1}_{\xi>0}\partial_\xi\widehat{u_0 m_-}(\xi)+\widehat{u_0 m_-}(0^+)\delta_0(\xi)
	=\Pi\mathcal{F}(-\mathrm{i}x u_0 m_-)(\xi)+\widehat{u_0 m_-}(0^+)\delta_0(\xi).
\end{equation*}
Since $\langle x\rangle u_0\in L^2$, we have $x u_0 m_-\in L^2_x$, hence the first term above belongs to $L^2_\xi$. Define $\Phi(\xi,\lambda):=\overline{\hat{h}(\xi)}\psi(\lambda)\hat{g}_\lambda(\xi)$. Since $\hat{h}$ is compactly supported in $[a,b]\subset(0,\infty)$, we have $\overline{\hat{h}(0)}\equiv0$. When differentiating $\Phi$, the $\delta_0(\xi)$ term vanishes, hence $\partial_\xi\Phi(\cdot,\lambda)\in L^2_\xi$. By the one-dimensional Sobolev embedding $H^1\hookrightarrow C^{1/2}$, $\Phi(\cdot,\lambda)$ possesses uniform H\"{o}lder-$1/2$ continuity with respect to $\xi$. The uniformity with respect to $\lambda\in[c,d]$ is guaranteed by Section~7 of \cite{BGGM25}.

Fix $\xi\in[a,b]$. Substitute \eqref{eq:pole_dist} into \eqref{eq:I_rho} and treat each term separately. The $\delta(\xi-\lambda)$ term, upon integration over $\lambda$, gives
\begin{equation}\label{eq:delta_term}
	\mathcal{I}_\delta(\xi)=\frac{1}{2\pi}\psi(\xi)\big(+\mathrm{i}\pi\hat{g}_\xi(\xi)\big)=+\frac{\mathrm{i}}{2}\Phi(\xi,\xi).
\end{equation}
On the other hand, the principal value integral is
\begin{equation*}
	\mathcal{I}_{PV}(t,\xi)=\frac{1}{2\pi}\,\mathrm{e}^{-\mathrm{i}t\xi^2}\,\mathrm{P.V.}\!\int_0^\infty\frac{\Phi(\xi,\lambda)}{\xi-\lambda}\,\mathrm{e}^{\mathrm{i}t\lambda^2}\,\mathrm{d}\lambda.
\end{equation*}
Let $u=\xi-\lambda$, yielding
\begin{equation*}
	\mathcal{I}_{PV}(t,\xi)=\frac{1}{2\pi}\,\mathrm{P.V.}\!\int_{-\infty}^\xi\frac{\Phi(\xi,\xi-u)}{u}\,\mathrm{e}^{-\mathrm{i}t(2\xi u-u^2)}\,\mathrm{d}u.
\end{equation*}
Introduce the change of variables
\begin{equation}\label{eq:v_subst}
	v:=2\xi u-u^2,
\end{equation}
and define $G_\xi(v):=\Phi(\xi,\xi-u(v))\cdot\frac{2\xi-u(v)}{2(\xi-u(v))}$, so that $G_\xi(0)=\Phi(\xi,\xi)$. We then obtain
\begin{equation}\label{eq:PV_transformed}
	\mathcal{I}_{PV}(t,\xi)=\frac{1}{2\pi}\,\mathrm{P.V.}\!\int_{-\infty}^{\xi^2-c^2}\frac{G_\xi(v)}{v}\,\mathrm{e}^{-\mathrm{i}tv}\,\mathrm{d}v.
\end{equation}
Let $A(\xi):=\xi^2-c^2$, and we distinguish two cases. If $A(\xi)>0$, using $\mathrm{P.V.}\!\int_{-\infty}^{\infty}\frac{\mathrm{e}^{-\mathrm{i}tv}}{v}\,\mathrm{d}v=-\mathrm{i}\pi$ ($t>0$), decompose \eqref{eq:PV_transformed} as
\begin{align*}
	\mathcal{I}_{PV}(t,\xi)&=\frac{1}{2\pi}\int_{-\infty}^{A(\xi)}\frac{G_\xi(v)-G_\xi(0)}{v}\,\mathrm{e}^{-\mathrm{i}tv}\,\mathrm{d}v\\
	&\quad+\frac{G_\xi(0)}{2\pi}\Big(\mathrm{P.V.}\!\int_{-\infty}^{\infty}\frac{\mathrm{e}^{-\mathrm{i}tv}}{v}\,\mathrm{d}v-\int_{A(\xi)}^\infty\frac{\mathrm{e}^{-\mathrm{i}tv}}{v}\,\mathrm{d}v\Big).
\end{align*}
In the first term above, $\frac{G_\xi(v)-G_\xi(0)}{v}$ belongs to $L^1_v$ by H\"{o}lder-$1/2$ continuity, and tends to $0$ as $t\to\infty$ by the Riemann-Lebesgue Lemma. In the second term above, the principal value integral equals $-\mathrm{i}\pi$, and since $A(\xi)>0$, the integration interval $[A(\xi),\infty)$ does not contain the singularity; integration by parts shows that it decays at the rate $\mathcal{O}(t^{-1})$. Hence
\begin{equation*}
	\lim_{t\to\infty}\mathcal{I}_{PV}(t,\xi)=\frac{G_\xi(0)}{2\pi}(-\mathrm{i}\pi)=-\frac{\mathrm{i}}{2}\Phi(\xi,\xi).
\end{equation*}
If $A(\xi)\le0$, the integral tends to $0$ as $t\to\infty$ by the Riemann-Lebesgue Lemma. In this case $\xi\le c$ and $\operatorname{supp}\psi=[c,d]$, so $\psi(\xi)=0$ and $\Phi(\xi,\xi)=0$. In summary, for any $\xi\in[a,b]$,
\begin{equation}\label{eq:PV_limit}
	\lim_{t\to\infty}\mathcal{I}_{PV}(t,\xi)=-\frac{\mathrm{i}}{2}\Phi(\xi,\xi).
\end{equation}
From \eqref{eq:delta_term} and \eqref{eq:PV_limit}, we obtain
\begin{equation}\label{eq:cancellation}
	\lim_{t\to\infty}\big(\mathcal{I}_\delta(\xi)+\mathcal{I}_{PV}(t,\xi)\big)=+\frac{\mathrm{i}}{2}\Phi(\xi,\xi)-\frac{\mathrm{i}}{2}\Phi(\xi,\xi)=0.
\end{equation}
One sees that each term in the above decomposition is uniformly controlled by an $L^1_\xi$ function independent of $t$. By Lebesgue Theorem,
\begin{equation*}
	\lim_{t\to\infty}\mathcal{I}_\rho(t)=\int_a^b\lim_{t\to\infty}\big(\mathcal{I}_\delta(\xi)+\mathcal{I}_{PV}(t,\xi)\big)\,\mathrm{d}\xi=0.
\end{equation*}
Therefore $\langle\Omega_t^{u_0}(f_\psi),U_0(t)h\rangle\to\langle u_\infty^+[\psi],h\rangle$ holds for all test functions, and the weak limit \eqref{eq:weak_goal} is established. Together with \eqref{eq:mass_cons}, the Radon-Riesz Theorem yields the $L^2$ strong convergence
\begin{equation*}
	\lim_{t\to\infty}\big\|\Omega_t^{u_0}(f_c)-U_0(t)u_\infty^+\big\|_{L^2}=0.
\end{equation*}
Adding the multisoliton from the discrete spectrum component together with $u=2\operatorname{Re}(\Pi u)$, we obtain the conclusion of Theorem \ref{thm1}.

Theorem \ref{thm2} follows by a similar argument as in the proof of Theorem \ref{thm1}, combined with the Gagliardo-Nirenberg inequality.

\section{Proof of Theorem \ref{thm3}: The inverse spectral problem for the infinite-order multisoliton}
For the Lax operator, denote $R_\lambda := (L_{u_0}-\lambda I)^{-1}$, where $\lambda\notin\sigma(L_{u_0})$. For each negative eigenvalue $\lambda<0$ of $L_{u_0}$ and corresponding eigenfunction $\varphi$, we have
\begin{equation}\label{eq.comm}
	(L_{u_0}-\lambda)(X^*\varphi) = -i\varphi + \frac{i}{2\pi}\,I_+(\varphi)\,\Pi u_0.
\end{equation}
This implies
\begin{equation}\label{eq.global_res}
	R_\lambda R_0(z) - R_0(z) R_\lambda = R_\lambda R_0(z)\,C_0\,R_0(z) R_\lambda,
\end{equation}
where
\begin{equation}\label{eq.C0}
	C_0 := i I_{L^2_+} - \frac{i}{2\pi}\,\Pi u_0\otimes I_+^*.
\end{equation}
This holds for all $\lambda\notin\sigma(L_{u_0})$ and $\operatorname{Im} z>0$.

Let $(\lambda_k)_{k\ge1}$ be the negative eigenvalues of $L_{u_0}$, arranged in increasing order, and let $\varphi_k$ be the corresponding normalized orthogonal eigenfunctions. Define
\begin{equation}\label{eq.Hd}
	\mathcal{H}_d := \overline{\operatorname{span}\{\varphi_k\}_{k\ge1}} \subset L^2_+(\mathbb{R}),
	\qquad \mathcal{H}_c := \mathcal{H}_d^\perp.
\end{equation}
Denote by $P_d$ and $P_c:=I-P_d$ the orthogonal projections onto $\mathcal{H}_d$ and $\mathcal{H}_c$, respectively, and define
\begin{equation}\label{eq.LdLc}
	L_d := L_{u_0}|_{\mathcal{H}_d}, \qquad L_c := L_{u_0}|_{\mathcal{H}_c}.
\end{equation}
The assumption $\sum_{k\ge1}|\lambda_k|<\infty$ implies
\begin{equation}\label{eq.Ld_trace}
	L_d \in \mathfrak{S}_1(\mathcal{H}_d).
\end{equation}
In particular, $L_d$ is bounded with $\|L_d\|\le |\lambda_1|$, $L_c$ is self-adjoint on $\mathcal{H}_c$, and $\|\Pi u_0\|_{L^2}^2 = \sum_k 2\pi|\lambda_k| = \sum_k |\langle\Pi u_0,\varphi_k\rangle|^2$, hence $\Pi u_0\in\mathcal{H}_d$.

Define
\begin{equation}\label{eq.W_def}
	W(z) := P_c\,R_0(z)\,P_d : \mathcal{H}_d \longrightarrow \mathcal{H}_c,
	\qquad \operatorname{Im} z > 0.
\end{equation}
For each $\operatorname{Im} z>0$, $W(z)$ is a bounded operator, and $z\mapsto W(z)$ is analytic in the upper half-plane. In the sequel, we prove that $W(z)\equiv0$ is equivalent to $\mathcal{H}_d$ being invariant under $R_0(z)$.

Multiply identity \eqref{eq.global_res} on the left by $P_c$ and on the right by $P_d$. Using $P_c R_\lambda = R_\lambda^c P_c$ and $R_\lambda P_d = P_d R_\lambda^d$, we obtain
\begin{equation}\label{eq.projected}
	R_\lambda^c\,W(z) - W(z)\,R_\lambda^d = R_\lambda^c\,
	\underbrace{P_c R_0(z) C_0 R_0(z) P_d}_{=:K(z)}\,
	R_\lambda^d.
\end{equation}
Expanding $K(z)$ yields
\begin{align}
	K(z) &= i P_c R_0(z)^2 P_d
	- \frac{i}{2\pi} P_c R_0(z)(\Pi u_0\otimes I_+^*) R_0(z) P_d. \label{eq.K_expand}
\end{align}
For $i P_c R_0(z)^2 P_d$ we have
\begin{align}
	P_c R_0(z)^2 P_d
	&= P_c R_0(z)(P_c+P_d) R_0(z) P_d \nonumber\\
	&= (P_c R_0(z) P_c)(P_c R_0(z) P_d)
	+ (P_c R_0(z) P_d)(P_d R_0(z) P_d) \nonumber\\
	&=: R_c(z) W(z) + W(z) R_d(z), \label{eq.R2_decomp}
\end{align}
where $R_c(z):=P_c R_0(z) P_c$ and $R_d(z):=P_d R_0(z) P_d$.
For $\frac{i}{2\pi} P_c R_0(z)(\Pi u_0\otimes I_+^*) R_0(z) P_d$, using $P_c\Pi u_0 = 0$ we obtain
\begin{align}
	P_c R_0(z)(\Pi u_0\otimes I_+^*) R_0(z) P_d
	&= P_c R_0(z) P_d \Pi u_0 \otimes I_+^* R_0(z) P_d \nonumber\\
	&= W(z) \Pi u_0 \otimes I_+^* R_0(z) P_d. \label{eq.rank1_term}
\end{align}
Substituting, we obtain
\begin{align}
	K(z) = i R_c(z) W(z)
	+ W(z)\Bigl( i R_d(z) - \frac{i}{2\pi} \Pi u_0 \otimes I_+^* R_0(z) P_d \Bigr). \label{eq.K_final}
\end{align}
Substituting back into the projected identity, multiplying on the left by $L_c-\lambda$ and on the right by $L_d-\lambda$, we obtain
\begin{align}
	W(z) L_d - L_c W(z)
	= i R_c(z) W(z)
	+ W(z)\Bigl( i R_d(z) - \frac{i}{2\pi} \Pi u_0 \otimes I_+^* R_0(z) P_d \Bigr). \label{eq.master}
\end{align}
Rearranging terms yields the following Sylvester equation
\begin{equation}\label{eq.Sylvester}
	A(z)\,W(z) - W(z)\,B(z) = 0, \qquad \operatorname{Im} z > 0,
\end{equation}
where
\begin{align}
	A(z) &:= L_c + i P_c R_0(z) P_c, \label{eq.A_def}\\
	B(z) &:= L_d - i P_d R_0(z) P_d + \frac{i}{2\pi} \Pi u_0 \otimes I_+^* R_0(z) P_d. \label{eq.B_def}
\end{align}
Thus, $A(z)$ is a closed operator on $\mathcal{H}_c$, and $B(z)$ is a bounded operator on $\mathcal{H}_d$.

We now evaluate \eqref{eq.Sylvester} along the imaginary axis $z=i\eta$. We first prove that for sufficiently large $\eta$, $\sigma(A(i\eta))$ and $\sigma(B(i\eta))$ can be separated.
Since the spectrum of $A(i\eta)$ can be expressed via $A(i\eta) = L_c + i R_c(i\eta)$ with $L_c\ge 0$ and $\|R_c(i\eta)\|\le 1/\eta$, for $f\in\mathcal{D}(L_c)$ we have
\begin{equation}\label{eq.A_numerical}
	\operatorname{Re}\langle A(i\eta)f,f\rangle
	= \langle L_c f,f\rangle + \operatorname{Re}\langle i R_c(i\eta)f,f\rangle
	\ge 0 - \frac{1}{\eta}\|f\|^2
	= -\frac{1}{\eta}\|f\|^2.
\end{equation}
Consequently,
\begin{equation}\label{eq.A_spectrum}
	\sigma(A(i\eta)) \subset \{\zeta\in\mathbb{C}: \operatorname{Re}\zeta \ge -1/\eta\}.
\end{equation}
On the other hand, the spectrum of $B(i\eta)$ can be expressed via $B(i\eta) = L_d + \Delta(i\eta)$, where
\begin{equation}\label{eq.Delta}
	\Delta(i\eta) := -i R_d(i\eta) + \frac{i}{2\pi} \Pi u_0 \otimes I_+^* R_0(i\eta) P_d.
\end{equation}
For $f\in\mathcal{H}_d$ ($\|f\|=1$), let $g=R_0(i\eta)f$. From $-X^*R_0=I-i\eta R_0$ we obtain
\begin{align}
	\frac{1}{4\pi}|I_+(g)|^2
	&= \operatorname{Im}\langle -X^* g, g\rangle
	= \operatorname{Im}\langle f - i\eta R_0 f, R_0 f\rangle \nonumber\\
	&\le \|f\|\,\|R_0 f\| \le \frac{1}{\eta}, \label{eq.Iplus_bound}
\end{align}
hence $|I_+(R_0(i\eta)f)| \le 2\sqrt{\pi/\eta}$, and
\begin{equation}\label{eq.functional_bound}
	\|I_+^* R_0(i\eta) P_d\|_{\mathcal{H}_d\to\mathbb{C}} \le 2\sqrt{\frac{\pi}{\eta}}.
\end{equation}
Therefore,
\begin{equation}\label{eq.Delta_bound}
	\|\Delta(i\eta)\| \le  \frac{1}{\eta} + \frac{\|\Pi u_0\|}{\sqrt{\pi\eta}}.
\end{equation}
For sufficiently large $\eta$, we have $\|\Delta(i\eta)\|\le |\lambda_1|/3$, which implies
\begin{equation}\label{eq.B_spectrum}
	\sigma(B(i\eta)) \subset \{\zeta\in\mathbb{C}: \operatorname{Re}\zeta \le \lambda_1 + |\lambda_1|/3
	= 2\lambda_1/3 < 0\}.
\end{equation}
Thus, choose $\eta_0$ sufficiently large so that $1/\eta_0 < |\lambda_1|/3$; then for all $\eta\ge\eta_0$ we have
\begin{equation}\label{eq.separation}
	\sigma(A(i\eta)) \subset \{\operatorname{Re}\zeta \ge -1/\eta\} \subset \{\operatorname{Re}\zeta > \lambda_1/2\},
	\qquad
	\sigma(B(i\eta)) \subset \{\operatorname{Re}\zeta \le 2\lambda_1/3 < \lambda_1/2\}.
\end{equation}
The two spectra are separated by the vertical line $\operatorname{Re}\zeta = \lambda_1/2$. Since $\sigma(A(i\eta))$ and $\sigma(B(i\eta))$ are disjoint, the Sylvester equation $AW - WB = 0$ yields $W=0$ by Rosenblum's theorem.
Take a contour $\Gamma$ enclosing $\sigma(B(i\eta))$ and lying entirely within $\rho(A(i\eta))$. From $AW=WB$ we obtain, for $\zeta\in\Gamma$,
\begin{equation}\label{eq.resolvent_sylvester}
	(A(i\eta)-\zeta)^{-1} W(i\eta) = W(i\eta) (B(i\eta)-\zeta)^{-1}.
\end{equation}
Integrating both sides along $\Gamma$ yields
\begin{equation}\label{AB}
	\frac{1}{2\pi i}\oint_\Gamma (A-\zeta)^{-1} W\,d\zeta
	= \frac{1}{2\pi i}\oint_\Gamma W (B-\zeta)^{-1}\,d\zeta.
\end{equation}
Since $\Gamma$ contains no spectrum of $A$, $(A-\zeta)^{-1}$ is analytic on and inside $\Gamma$, hence the left-hand side of \eqref{AB} equals $0\cdot W = 0$. For the right-hand side of \eqref{AB}, $\Gamma$ encloses the entire spectrum of the bounded operator $B$, so the integral equals the Riesz projection $I_{\mathcal{H}_d}$. The right-hand side becomes $W\cdot I = W$.
Thus $0 = W(i\eta)$, i.e., $W(i\eta)=0$ for all $\eta\ge\eta_0$.

The map $z\mapsto W(z)$ is analytic on the connected open set $\{\operatorname{Im} z>0\}$ and vanishes identically on $z=i\eta$ ($\eta\ge\eta_0$), which implies $W(z) \equiv 0,\enspace \operatorname{Im} z > 0.$
Hence $P_c R_0(z) P_d = 0$ for all $\operatorname{Im} z>0$, i.e., $\mathcal{H}_d$ is invariant under $R_0(z)$. Since
\begin{equation}\label{eq.Rz_restricted}
	R(z) := R_0(z)\big|_{\mathcal{H}_d} : \mathcal{H}_d \longrightarrow \mathcal{H}_d,
	\qquad \operatorname{Im} z > 0,
\end{equation}
is a globally bounded operator on $\mathcal{H}_d$ with $\|R(z)\|\le (\operatorname{Im} z)^{-1}$, it follows that $\mathcal{H}_d$ is invariant under $R_\lambda$, and we denote $R_\lambda^d := R_\lambda|_{\mathcal{H}_d} = (L_d-\lambda I)^{-1}$. Since $\mathcal{H}_d$ is invariant under both $R_\lambda$ and $R_0(z)$, we obtain
\begin{equation}\label{eq.restricted_res}
	R_\lambda^d R(z) - R(z) R_\lambda^d = R_\lambda^d R(z)\,C\,R(z) R_\lambda^d,
\end{equation}
where $C := C_0|_{\mathcal{H}_d} = i I_{\mathcal{H}_d} - \frac{i}{2\pi} \Pi u_0\otimes I_+^*$.

Since $L_d$ is bounded, for $|\lambda| > \|L_d\|$, $R_\lambda^d$ admits a norm-convergent Neumann series
\begin{equation}\label{eq.Neumann}
	R_\lambda^d = -\frac{1}{\lambda} I - \frac{1}{\lambda^2} L_d - \frac{1}{\lambda^3} L_d^2 - \cdots,
\end{equation}
Substituting this into \eqref{eq.restricted_res}, the left-hand side becomes
\begin{align}
	R_\lambda^d R(z) - R(z) R_\lambda^d
	&= \Bigl(-\frac{1}{\lambda}I - \frac{1}{\lambda^2}L_d + \mathcal{O}(\lambda^{-3})\Bigr) R(z)
	- R(z)\Bigl(-\frac{1}{\lambda}I - \frac{1}{\lambda^2}L_d + \mathcal{O}(\lambda^{-3})\Bigr)
	\nonumber\\
	&= -\frac{1}{\lambda^2}[L_d, R(z)] + \mathcal{O}(\lambda^{-3}). \label{eq.LHS_expand}
\end{align}
The right-hand side becomes
\begin{align}
	R_\lambda^d R(z) C R(z) R_\lambda^d
	&= \Bigl(-\frac{1}{\lambda}I + \mathcal{O}(\lambda^{-2})\Bigr) R(z) C R(z)
	\Bigl(-\frac{1}{\lambda}I + \mathcal{O}(\lambda^{-2})\Bigr) \nonumber\\
	&= \frac{1}{\lambda^2} R(z) C R(z) + \mathcal{O}(\lambda^{-3}). \label{eq.RHS_expand}
\end{align}
This identity holds for all $|\lambda|>\|L_d\|$ ($\lambda\notin\sigma(L_d)$). Equating the $\lambda^{-2}$ coefficients of the Laurent series yields
\begin{equation}\label{eq.commutator_identity}
	-[L_d, R(z)] = R(z) C R(z).
\end{equation}
Expanding $C$ gives
\begin{equation}\label{eq.commutator_expanded}
	[L_d, R(z)] = -i R(z)^2 + \frac{i}{2\pi} R(z)(\Pi u_0\otimes I_+^*) R(z).
\end{equation}
Since $L_d\in\mathfrak{S}_1(\mathcal{H}_d)$, we have $[L_d, R(z)]\in\mathfrak{S}_1(\mathcal{H}_d)$. Because $R(z)(\Pi u_0\otimes I_+^*) R(z) = (R(z)\Pi u_0)\otimes(I_+^*\circ R(z)^*)$ is a rank-one operator, it belongs to $\mathfrak{S}_1$, and one readily sees that their difference is also trace class, i.e.,
\begin{equation}\label{eq.R2_trace_class}
	R(z)^2 \in \mathfrak{S}_1(\mathcal{H}_d).
\end{equation}
In particular, the operator $M:=-X^*|_{\mathcal{H}_d}$ has purely discrete spectrum, consisting of finitely many eigenvalues.

Let $N\ge 1$ and let $P_N$ be the orthogonal projection onto $\operatorname{span}\{\varphi_1,\dots,\varphi_N\}$. Since $\{\varphi_k\}$ are eigenfunctions of $L_d$, $P_N L_d = L_d P_N$.
Multiplying the commutator identity on both sides by $P_N$
\begin{equation}\label{eq.PN_commutator}
	P_N[L_d, R(z)]P_N = -i P_N R(z)^2 P_N
	+ \frac{i}{2\pi} P_N R(z)(\Pi u_0\otimes I_+^*) R(z) P_N.
\end{equation}
Using $P_N L_d = L_d P_N$, the left-hand side simplifies to
\begin{align}
	P_N[L_d, R(z)]P_N
	&= P_N L_d R(z) P_N - P_N R(z) L_d P_N \nonumber\\
	&= L_d P_N R(z) P_N - P_N R(z) P_N L_d \nonumber\\
	&= [L_d P_N, P_N R(z) P_N]. \label{eq.PN_comm_simplified}
\end{align}
Since
\begin{equation}\label{eq.PN_trace}
	0 = -i\operatorname{Tr}(P_N R(z)^2 P_N)
	+ \frac{i}{2\pi} \operatorname{Tr}\bigl( P_N R(z)(\Pi u_0\otimes I_+^*) R(z) P_N \bigr)
\end{equation}
and
\begin{align}
	\operatorname{Tr}\bigl( P_N R(z)(\Pi u_0\otimes I_+^*) R(z) P_N \bigr)
	&= \operatorname{Tr}\bigl( (P_N R(z)\Pi u_0) \otimes (I_+^*\circ R(z) P_N) \bigr) \nonumber\\
	&= I_+\bigl( R(z) P_N R(z) \Pi u_0 \bigr), \label{eq.rank_one_trace}
\end{align}
we obtain
\begin{equation}\label{eq.trace_identity_N}
	\operatorname{Tr}(P_N R(z)^2 P_N) = \frac{1}{2\pi}\, I_+\bigl( R(z) P_N R(z) \Pi u_0 \bigr).
\end{equation}
This implies
\begin{equation}\label{eq.trace_norm_convergence}
	P_N R(z)^2 P_N \xrightarrow{\mathfrak{S}_1} R(z)^2 \qquad (N\to\infty),
\end{equation}
hence $\operatorname{Tr}(P_N R(z)^2 P_N) \to \operatorname{Tr}(R(z)^2)$.
For the right-hand side of \eqref{eq.restricted_res}, set $v_N := R(z) P_N R(z) \Pi u_0$. Since $R(z):\mathcal{H}_d\to\mathcal{D}(M)\subset\mathcal{D}(X^*)$ is a bounded map into $\mathcal{D}(X^*)$, and $P_N R(z)\Pi u_0\to R(z)\Pi u_0$ strongly in $\mathcal{H}_d$, it follows that $v_N\to R(z)^2\Pi u_0$ in the topology of $\mathcal{D}(X^*)$. Since $I_+$ is continuous on $\mathcal{D}(X^*)$, we have $I_+(v_N)\to I_+(R(z)^2\Pi u_0)$. Passing to the limit $N\to\infty$ yields
\begin{equation}\label{eq.exact_trace}
	\operatorname{Tr}(R(z)^2) = \frac{1}{2\pi}\, I_+\bigl( R(z)^2 \Pi u_0 \bigr), \qquad
	\operatorname{Im} z > 0.
\end{equation}

Define the analytic function on the upper half-plane
\begin{equation}\label{eq.F_def}
	F(z) := \frac{1}{2\pi}\, I_+\bigl( R(z) \Pi u_0 \bigr).
\end{equation}
Using $\frac{d}{dz} R(z) = -R(z)^2$,
\begin{equation}\label{eq.F_derivative}
	F'(z) = -\frac{1}{2\pi}\, I_+\bigl( R(z)^2 \Pi u_0 \bigr) = -\operatorname{Tr}(R(z)^2).
\end{equation}
Denote the eigenvalues of $M$ by $\{-p_m\}_{m=1}^\infty$. Since $M$ is the restriction of $-X^*$, its numerical range lies in the upper half-plane, hence $\operatorname{Im} p_m > 0$.
Because $R(z)^2 = (M+zI)^{-2}$ has eigenvalues $(z+p_m)^{-2}$, Lidskii's trace theorem gives
\begin{equation}\label{eq.Lidskii}
	\operatorname{Tr}(R(z)^2) = \sum_{m=1}^\infty \frac{1}{(z+p_m)^2}.
\end{equation}
Substituting into $F'(z)$ and integrating from $+i\infty$ to $z$ along the upper half-plane yields
\begin{equation}\label{eq.F_integrated}
	F(z)= F(+i\infty) - \int_{+i\infty}^z F'(\zeta)\,d\zeta
	= \sum_{m=1}^\infty \frac{1}{z+p_m}.
\end{equation}
Since
\begin{equation}\label{eq.explicit_formula}
	\Pi u_0(z) = \frac{1}{2\pi i}\, I_+\bigl( (X^*-z)^{-1} \Pi u_0 \bigr)
	= \frac{1}{2\pi i}\, I_+\bigl( -(M+z)^{-1} \Pi u_0 \bigr)
	= -\frac{1}{2\pi i}\, I_+\bigl( R(z) \Pi u_0 \bigr),
\end{equation}
we have
\begin{equation}\label{eq.Piu_meromorphic}
	\Pi u_0(z) = i\,F(z) = i\sum_{m=1}^\infty \frac{1}{z+p_m}, \qquad
	\operatorname{Im} z > 0.
\end{equation}
Thus $\Pi u_0$ is a meromorphic function in the upper half-plane with simple poles at $-p_m$, each of residue $i$. Consequently,
\begin{equation}\label{eq.u0_from_Pi}
	u_0(x) = 2\,\operatorname{Re}\bigl( \Pi u_0(x) \bigr)
	= 2\,\operatorname{Re}\Bigl( i\sum_{m=1}^\infty \frac{1}{x+p_m} \Bigr)
	= \sum_{m=1}^\infty \frac{2\operatorname{Im} p_m}{|x+p_m|^2}, \qquad x\in\mathbb{R}.
\end{equation}
Computing the $L^2$ norm of $\Pi u_0$ via the residue theorem yields
\begin{equation*}
	\|\Pi u_0\|_{L^2}^2
	= \int_\mathbb{R} \Bigl| i\sum_{j=1}^\infty \frac{1}{x+p_j} \Bigr|^2\,dx
	= \sum_{j,k=1}^\infty \frac{2\pi i}{p_j - \overline{p_k}}
	= 2\pi \sum_{j,k=1}^\infty \frac{\operatorname{Im} p_j + \operatorname{Im} p_k}
	{(\operatorname{Re} p_j - \operatorname{Re} p_k)^2 + (\operatorname{Im} p_j + \operatorname{Im} p_k)^2}.
\end{equation*}
Applying Tonelli's theorem together with the hypothesis $\|\Pi u_0\|_{L^2}^2<\infty$, we obtain
\begin{equation}\label{eq.double_sum_finite}
	\sum_{j=1}^\infty \sum_{k=1}^\infty
	\frac{\operatorname{Im} p_j}
	{(\operatorname{Re} p_j - \operatorname{Re} p_k)^2 + (\operatorname{Im} p_j + \operatorname{Im} p_k)^2}
	< \infty.
\end{equation}
Note that $|p_j - \overline{p_k}|^2 = (\operatorname{Re} p_j - \operatorname{Re} p_k)^2 + (\operatorname{Im} p_j + \operatorname{Im} p_k)^2$. This completes the proof of Proposition \ref{thm3}.

\section{Proof of Theorem \ref{thm4}: Explicit error bound in $L^\infty$-norm for the infinite-order multisoliton}
Under the assumptions of Theorem \ref{thm4}, the orthogonal projection of the initial data onto the Hardy space can be expressed as $\Pi u_0(x) = i \sum_{j=1}^\infty \frac{1}{x + p_j}$, which leads to
\begin{equation}\label{5jifen}
	\|\Pi u_0\|_{L^2(\mathbb{R})}^2 = \int_{-\infty}^\infty \left( i \sum_{j=1}^\infty \frac{1}{x + p_j} \right) \left( -i \sum_{k=1}^\infty \frac{1}{x + \overline{p_k}} \right) \mathrm{d}x.
\end{equation}
For the integral $\int_{-\infty}^\infty \frac{1}{(x+p_j)(x+\overline{p_k})} \mathrm{d}x$, by closing the contour in the upper half-plane and applying the residue theorem, the contour encloses only the pole $z = -\overline{p_k}$. Thus, we calculate
\begin{equation*}
	\int_{-\infty}^\infty \frac{1}{(x+p_j)(x+\overline{p_k})} \mathrm{d}x = 2\pi i \cdot \operatorname{Res}_{z = -\overline{p_k}} \left[ \frac{1}{(z + p_j)(z + \overline{p_k})} \right] = \frac{2\pi i}{p_j - \overline{p_k}}.
\end{equation*}
Substituting this back into \eqref{5jifen} and using the symmetry of the indices $(j,k)$, we obtain
\begin{equation}
	\|\Pi u_0\|_{L^2}^2 = \frac{1}{2} \sum_{j=1}^\infty \sum_{k=1}^\infty \left( \frac{2\pi i}{p_j - \overline{p_k}} + \frac{2\pi i}{p_k - \overline{p_j}} \right) = 4\pi \sum_{j=1}^\infty \sum_{k=1}^\infty \frac{\mathrm{Im}(p_j)}{|p_j - \overline{p_k}|^2}.
\end{equation}

Based on the resolvent projection formula derived from the explicit formula
$$\Pi u(t, x) = \frac{1}{2\pi i} I_+ \left( (X^* - 2t L_{u_0} - x)^{-1} \Pi u_0 \right),$$
by expanding it under the orthonormal eigenbasis $\{\varphi_k\}_{k=1}^\infty$ of the Lax operator, the explicit formula for the solution can be expressed in the following form:
\begin{equation}\label{form3}
	\Pi u(t, x) = i \sum_{j=1}^\infty \sum_{k=1}^\infty W_{jk} [D(t, x) + E]^{-1}_{jk},
\end{equation}
where the matrices are defined as follows:
\begin{itemize}
	\item Diagonal matrix: $D_{kk}(t, x) = x - c_k t + x_k^\infty + i y_k^\infty$; $D_{jk} = 0$ for $j \neq k$.
	\item Off-diagonal matrix: $E_{kk} = 0$; $E_{jk} = \frac{2i (y_k^\infty)^{3/2} (y_j^\infty)^{1/2}}{y_k^\infty - y_j^\infty} e^{i(\theta_j - \theta_k)}$ for $j \neq k$.
	\item Coefficient matrix: $W_{jk} = \sqrt{\frac{y_j^\infty}{y_k^\infty}} e^{i(\theta_k - \theta_j)}$.
\end{itemize}
We introduce a positive integer $N \ge 1$ and truncate the above matrices into finite-dimensional submatrices of the first $N$ solitons. Let $\mathcal{E}_{\mathrm{tail}}^{(N)}$ be the sum from the $(N+1)$-th soliton to the infinite solitons, namely $\mathcal{E}_{\mathrm{tail}}^{(N)}=\sum_{k=N+1}^{\infty}{R_{p_{k}^{\infty}}\left( x-c_{p_{k}^{\infty}}t \right)}$, which leads to
\begin{equation}\label{gaopin}
	\|\mathcal{E}_{\mathrm{tail}}^{(N)}(t, \cdot)\|_{L^\infty} \le \sum_{k=N+1}^\infty 4|\lambda_k|\to 0, \quad N\to\infty.
\end{equation}

For the truncated $N \times N$ submatrices, consider $\left( D_{N}^{-1}E_N \right) ^2$. We have
$$\left[ \left( D_{N}^{-1}E_N \right) ^2 \right] _{jk}=\sum_{l=1}^N{\left( D_{N}^{-1}E_N \right) _{jl}\left( D_{N}^{-1}E_N \right) _{lk}}=\sum_{l=1}^N{D_{jj}^{-1}E_{jl}D_{ll}^{-1}E_{lk}}.$$
Since the diagonal elements of $E_N$ are all $0$, it follows that each component on the right-hand side of the above equation contains a coefficient of the form $D_{jj}^{-1}(t,x) D_{ll}^{-1}(t,x)$. Given that $\left| D_{jj} \right|+\left| D_{ll} \right|\geqslant \left| c_j-c_l \right|t$, this implies $\frac{1}{\left| D_{jj}D_{ll} \right|}\leqslant \mathcal{O} \left( \frac{1}{t} \right).$ Therefore, when the time $t$ is sufficiently large, the spectral radius of $D^{-1}_{N}E_N$ is less than $1$. Consequently, we have
\begin{equation}\label{niuman}
	[D_N + E_N]^{-1} = D_N^{-1} - D_N^{-1}E_N D_N^{-1} + D_N^{-1}E_N D_N^{-1}E_N D_N^{-1} - \dots.
\end{equation}

Substituting the $0$-th order term $D_N^{-1}$ of the expansion into \eqref{form3}, we obtain $\Pi u^{\left( 0 \right)}\left( t,x \right) =i\sum_{k=1}^N{W_{kk}D_{kk}^{-1}}$. Since $W_{kk}=1$, we arrive at
\begin{equation}\label{0jie}
	\Pi u^{\left( 0 \right)}\left( t,x \right) =\sum_{k=1}^N{\frac{2y_{k}^{\infty}}{\left( x-c_{p^\infty_k}t+x_{k}^{\infty} \right)^2 +\left( y_{k}^{\infty} \right) ^2}}=u_{\mathrm{sol}}^{N}\left( t,x \right).
\end{equation}

Similarly, substituting the $1$-st order term of the expansion \eqref{niuman} into \eqref{form3} yields
\begin{equation}\label{1jie}
	\mathcal{E}^{(1)}_N(t,x) = -i \sum_{j=1}^N \sum_{k=1}^N \underbrace{\big( W_{jk} E_{jk} \big)}_{:=A_{jk}} D_{jj}^{-1}(t,x) D_{kk}^{-1}(t,x),
\end{equation}
where $A_{jk}$ is given by
\begin{equation}
	A_{jk} = \left( \sqrt{\frac{y_j^\infty}{y_k^\infty}} e^{i(\theta_k - \theta_j)} \right) \left( \frac{2i (y_k^\infty)^{3/2} (y_j^\infty)^{1/2}}{y_k^\infty - y_j^\infty} e^{i(\theta_j - \theta_k)} \right) = \frac{2i y_j^\infty y_k^\infty}{y_k^\infty - y_j^\infty}.
\end{equation}
This implies $A_{kj} = -A_{jk}$. Since
$$ \sum_{j,k} A_{jk} D_{jj}^{-1} D_{kk}^{-1} = \sum_{k,j} A_{kj} D_{kk}^{-1} D_{jj}^{-1} = \sum_{j,k} (-A_{jk}) D_{jj}^{-1} D_{kk}^{-1}, $$
we obtain $\mathcal{E}^{(1)}_N(t,x) = 0$.

Substituting the $2$-nd order term of the expansion \eqref{niuman} into \eqref{form3}, we obtain
\begin{equation}
	\mathcal{E}^{(2)}_N(t,x) = i \sum_{\substack{j, k, l \le N \\ j \neq l, k \neq l}} W_{jk} E_{jl} E_{lk} \frac{1}{D_j D_l D_k}.
\end{equation}
Taking the absolute value of the numerator and substituting $y_m^\infty = \frac{1}{2|\lambda_m|}$ along with the identity $|y_a^\infty - y_b^\infty| = 2 y_a^\infty y_b^\infty |\lambda_a - \lambda_b|$, it follows that:
\begin{equation}
	|W_{jk} E_{jl} E_{lk}| = \frac{4 y_j^\infty (y_l^\infty)^2 y_k^\infty}{|y_l^\infty - y_j^\infty| \cdot |y_k^\infty - y_l^\infty|} = \frac{1}{|\lambda_j - \lambda_l| \cdot |\lambda_k - \lambda_l|}.
\end{equation}
For the denominator, each single term satisfies $|D_k| \ge \mathrm{Im}(D_{kk}) = y_k^\infty = \frac{1}{2|\lambda_k|}$. For any $j \neq l$, employing the triangle inequality yields
\begin{equation}
	|D_j| + |D_l| \ge \big| (x - c_j t + x_j^\infty) - (x - c_l t + x_l^\infty) \big| \ge |c_j - c_l|t - |x_j^\infty - x_l^\infty|.
\end{equation}
Substituting $|c_j - c_l| = 2|\lambda_j - \lambda_l|$, when the time $t$ is sufficiently large, we have $|D_j| + |D_l| \ge |\lambda_j - \lambda_l|t$.
Using the inequality $|D_j D_l| \ge \frac{1}{2} \min(|D_j|, |D_l|) (|D_j| + |D_l|)$ and the fact that $\min(|D_j|, |D_l|) \ge y_1^\infty = \frac{1}{2|\lambda_1|}$, we obtain:
\begin{equation}
	\frac{1}{|D_j D_l|} \le \frac{2}{y_1^\infty ( |D_j| + |D_l| )} \le \frac{4|\lambda_1|}{|\lambda_j - \lambda_l| t}.
\end{equation}
According to the above estimation process, we arrive at
\begin{equation}\label{erjie}
	\|\mathcal{E}_{\mathrm{N}}^{(2)}(t, \cdot)\|_{L^\infty} \le \frac{C'}{t} \sum_{\substack{j, k, l \le N \\ j \neq l, k \neq l}} \frac{|\lambda_k|}{|\lambda_j - \lambda_l|^2 \cdot |\lambda_k - \lambda_l|}.
\end{equation}

For the third term and all subsequent expansion terms in \eqref{niuman}, let $A := D_N^{-1} E_N$. We have already proved that the norm of $A^2$ is controlled by $C_N t^{-1}$, where $C_N$ is a constant depending on the truncation $N$. This implies that the errors $\mathcal{E}_{\mathrm{N}}^{(j)}, \enspace j > 2$, obtained by substituting the third and all subsequent expansion terms of \eqref{niuman} into \eqref{form3}, are all controlled by $C_N t^{-1}$.

In summary, the error between the exact solution $u(t, x)$ and $u_{\mathrm{sol}}(t, x)$ is controlled by the sum of the error \eqref{gaopin} and the error \eqref{erjie}. Since the truncation parameter $N \ge 1$ is arbitrary, for any $t > 0$, we obtain
\begin{equation}\label{zongwucha}
	\left\| u(t, \cdot) - u_{\mathrm{sol}}(t, \cdot) \right\|_{L^\infty(\mathbb{R})} \le C \inf_{N \ge 1} \left\{ \sum_{k=N+1}^\infty |\lambda_k| + \frac{1}{t} \sum_{\substack{j, k, l \le N \\ j \neq l, k \neq l}} \frac{|\lambda_k|}{|\lambda_j - \lambda_l|^2 \cdot |\lambda_k - \lambda_l|} \right\}.
\end{equation}

Let $\mathcal{E}_{\mathrm{stat}}(N)=\sum_{k=N+1}^\infty |\lambda_k|$ and $$\mathcal{E}_{\mathrm{dyn}}(N)=\sum_{l=1}^N \sum_{\substack{j=1 \\ j \neq l}}^N \sum_{\substack{k=1 \\ k \neq l}}^N \frac{|\lambda_k|}{|\lambda_j - \lambda_l|^2 \cdot |\lambda_k - \lambda_l|}.$$

Under the condition $|\lambda_k| \le C_1 k^{-\alpha}$ with $\alpha > 1$, we have
\begin{equation}\label{proof_static}
	\mathcal{E}_{\mathrm{stat}}(N) = \sum_{k=N+1}^\infty |\lambda_k| \le C_1 \sum_{k=N+1}^\infty k^{-\alpha} \le C_1 \int_N^\infty x^{-\alpha} \,\mathrm{d}x = \frac{C_1}{\alpha - 1} N^{1-\alpha},
\end{equation}
which implies that $\mathcal{E}_{\mathrm{stat}}(N) = \mathcal{O}\left( N^{1-\alpha} \right)$.

For $\mathcal{E}_{\mathrm{dyn}}(N)$, we have
\begin{equation*}
	\mathcal{E}_{\mathrm{dyn}}(N) = \sum_{l=1}^N \left( \sum_{\substack{j=1 \\ j \neq l}}^N \frac{1}{|\lambda_j - \lambda_l|^2} \right) \left( \sum_{\substack{k=1 \\ k \neq l}}^N \frac{|\lambda_k|}{|\lambda_k - \lambda_l|} \right) := \sum_{l=1}^N A_l \cdot B_l.
\end{equation*}

Based on the condition $|\lambda_j - \lambda_l| \ge C_2 |j - l| \max(j,l)^{-\alpha-1}$, the following estimate for $A_l$ holds:
\begin{equation*}
	\frac{1}{|\lambda_j - \lambda_l|^2} \le \frac{\max(j,l)^{2\alpha+2}}{C_2^2 |j-l|^2}.
\end{equation*}
By using $\sum_{m=1}^\infty m^{-2} = \pi^2/6$, we further obtain the following estimate:
\begin{equation*}\label{Alguji}
	A_l \le \frac{N^{2\alpha+2}}{C_2^2} \sum_{\substack{j=1 \\ j \neq l}}^N \frac{1}{|j-l|^2} \le \frac{N^{2\alpha+2}}{C_2^2} \left( 2 \sum_{m=1}^\infty \frac{1}{m^2} \right) = \frac{\pi^2}{3 C_2^2} N^{2\alpha+2} = \mathcal{O}\left( N^{2\alpha+2} \right).
\end{equation*}

For $B_l = \sum_{k \neq l} \frac{|\lambda_k|}{|\lambda_k - \lambda_l|}$, we discuss the two cases $k > l$ and $k < l$. When $k > l$, we have $|\lambda_k| < |\lambda_l|$ and $\max(k,l) = k$.
In this case, the denominator is $|\lambda_k - \lambda_l| = |\lambda_l| - |\lambda_k| \ge C_2 (k-l) k^{-\alpha-1}$. Scaling the numerator as $|\lambda_k| \le C_1 k^{-\alpha}$, we obtain
$\frac{|\lambda _k|}{|\lambda _k-\lambda _l|}\le \frac{C_1}{C_2}\frac{N}{k-l}$. Summing over $k$, we have
\begin{equation}\label{qingkaung1}
	\sum_{k=l+1}^N \frac{C_1 N}{C_2 (k-l)} = \frac{C_1 N}{C_2} \sum_{m=1}^{N-l} \frac{1}{m} \le \frac{C_1 N}{C_2} \big( 1 + \ln(N-l) \big) = \mathcal{O}(N \ln N).
\end{equation}
When $k < l$, we have $|\lambda_k| > |\lambda_l|$ and $\max(k,l) = l$. Since $\frac{|\lambda _k|}{|\lambda _k-\lambda _l|}=1+\frac{|\lambda _l|}{|\lambda _k|-|\lambda _l|},$ substituting $|\lambda_l| \le C_1 l^{-\alpha}$ and $|\lambda_k - \lambda_l| \ge C_2 (l-k) l^{-\alpha-1}$ obtained from the condition \eqref{gap_condition} into the above expression yields
$\frac{|\lambda _l|}{|\lambda _k|-|\lambda _l|}\le \frac{C_1}{C_2}\frac{N}{l-k}.$ Summing over $k$, we have
\begin{equation}\label{qingkaung2}
	\sum_{k=1}^{l-1} \left( 1 + \frac{C_1 N}{C_2 (l-k)} \right) \le N + \frac{C_1 N}{C_2} \sum_{m=1}^{l-1} \frac{1}{m} = \mathcal{O}(N \ln N).
\end{equation}
Combining \eqref{qingkaung1} and \eqref{qingkaung2}, we obtain that for any $l \le N$, $B_l = \mathcal{O}(N \ln N)$ holds.

Substituting the estimates of $A_l$ and $B_l$ into $\mathcal{E}_{\mathrm{dyn}}(N)$, we obtain
\begin{equation}\label{proof_dynamic}
	\mathcal{E}_{\mathrm{dyn}}(N) = \sum_{l=1}^N A_l B_l \le \sum_{l=1}^N \mathcal{O}(N^{2\alpha+2}) \cdot \mathcal{O}(N \ln N) \le \mathcal{O}\left( N^{2\alpha+5} \right).
\end{equation}

Substituting the estimate \eqref{proof_static} for $\mathcal{E}_{\mathrm{stat}}(N)$ and the estimate \eqref{proof_dynamic} for $\mathcal{E}_{\mathrm{dyn}}(N)$ back into \eqref{zongwucha} and setting $t = N^{3\alpha+4}$ , we have
\begin{equation*}
	\left\| u(t, \cdot) - u_{\mathrm{sol}}(t, \cdot) \right\|_{L^\infty} = \mathcal{O}\left( \left( t^{\frac{1}{3\alpha+4}} \right)^{1-\alpha} \right) = \mathcal{O}\left( t^{-\frac{\alpha-1}{3\alpha+4}} \right).
\end{equation*}


\begin{thebibliography}{199}
	
	\bibitem{ABFS} {\small \textsc{Ablowitz, M. J., Fokas, A. S., Musslimani, Z. H.} On a new non-local formulation of water waves, {\it Journal of Fluid Mechanics,} {\bf 562} (2006), 313-343.}
	
	\bibitem{AT91} {\small \textsc{Amick, C. J., Toland, J. F.} Uniqueness and related analytic properties for the Benjamin-Ono equation-a nonlinear Neumann problem in the plane, {\it Acta Mathematica,} {\bf 167} (1991), 107-126.}
	
	\bibitem{BKV25} {\small \textsc{Badreddine, R., Killip, R., Vi\c{s}an, M.} Orbital stability of Benjamin-Ono multisolitons, {\it arXiv preprint arXiv:2509.14153,} (2025).}
	
	\bibitem{Ben67} {\small \textsc{Benjamin, T. B.} Internal waves of permanent form in fluids of great depth, {\it Journal of Fluid Mechanics,} {\bf 29} (1967), 559-592.}
	
	\bibitem{BGGM24} {\small \textsc{Blackstone, E., Gassot, L., G\'erard, P., Miller, P. D.} The Benjamin-Ono equation in the zero-dispersion limit for rational initial data: generation of dispersive shock waves, {\it Communications on Pure and Applied Mathematics,} (2024) e70044.}
	
	\bibitem{BGGM25} {\small \textsc{Blackstone, E., Gassot, L., G\'erard, P., Miller, P. D.} The Benjamin-Ono initial-value problem for rational data with application to long time asymptotics and scattering, {\it Annales de l'Institut Henri Poincar\'e C, Analyse non lin\'eaire,} (2025).}
	
	
	\bibitem{BJM18} {\small \textsc{Borghese, M., Jenkins, R., McLaughlin, K. D. T.-R.} Long time asymptotic behavior of the focusing nonlinear Schr\"odinger equation, {\it Annales de l'Institut Henri Poincar\'e C, Analyse non lin\'eaire,} {\bf 35} (2018), 887-920.}
	
	\bibitem{BP08} {\small \textsc{Burq, N., Planchon, F.} On well-posedness for the Benjamin?Ono equation, {\it Mathematische Annalen,} {\bf 340} (2008), 497-542.}
	
	\bibitem{CLW23} {\small \textsc{Charlier, C., Lenells, J., Wang, D. S.} The ``good'' Boussinesq equation: long-time asymptotics, {\it Analysis \& PDE,} {\bf 16} (2023), 1351-1388.}
	
	\bibitem{CL21} {\small \textsc{Chen, G., Liu, J.} Soliton resolution for the focusing modified KdV equation, {\it Annales de l'Institut Henri Poincar\'e C, Analyse non lin\'eaire,} {\bf 38} (2021), 2005-2071.}
	
	\bibitem{C24} {\small \textsc{Chen, X.} Explicit formula for the Benjamin-Ono equation with square integrable and real valued initial data and applications to the zero dispersion limit, {\it Pure and Applied Analysis,} {\bf 16} (2025), 101-126.}
	
	\bibitem{Che25} {\small \textsc{Chen, X.} Scattering of the defocusing Calogero-Moser derivative nonlinear Schr\"odinger equation, {\it arXiv preprint arXiv:2511.06432,} (2025).}
	
	
	\bibitem{CJ16} {\small \textsc{Cuccagna, S., Jenkins, R.} On the asymptotic stability of $N$-soliton solutions of the defocusing nonlinear Schr\"odinger equation, {\it Communications in Mathematical Physics,} {\bf 343} (2016), 921-969.}
	
	\bibitem{DA67} {\small \textsc{Davis, R. E., Acrivos, A.} Solitary internal waves in deep water, {\it Journal of Fluid Mechanics,} {\bf 29} (1967), 593--607.}
	
	\bibitem{DP11} {\small \textsc{Deift, P. A., Park, J.} Long-time asymptotics for solutions of the NLS equation with a delta potential and even initial data, {\it International Mathematics Research Notices,} {\bf 2011} (2011), 5505-5624.}
	
	\bibitem{DVZ94} {\small \textsc{Deift, P. A., Venakides, S., Zhou, X.} The collisionless shock region for the long-time behavior of solutions of the KdV equation, {\it Communications on Pure and Applied Mathematics,} {\bf 47} (1994), 199-206.}
	
	\bibitem{DZ93} {\small \textsc{Deift, P. A., Zhou, X.} A steepest descent method for oscillatory Riemann-Hilbert problems. Asymptotics for the MKdV equation, {\it Annals of Mathematics,} {\bf 137} (1993), 295-368.}
	
	\bibitem{DKMM22} {\small \textsc{Duyckaerts, T., Kenig, C., Martel, Y., Merle, F.} Soliton resolution for critical co-rotational wave maps and radial cubic wave equation, {\it Communications in Mathematical Physics,} {\bf 391} (2022), 779-871.}
	
	\bibitem{DKM23} {\small \textsc{Duyckaerts, T., Kenig, C., Merle, F.} Soliton resolution for the radial critical wave equation in all odd space dimensions, {\it Acta Mathematica,} {\bf 230} (2023), 1-92.}
	
	\bibitem{GGM26} {\small \textsc{Gassot, L., G\'erard, P., Miller, P. D.} A proof of the soliton resolution conjecture for the Benjamin-Ono equation, {\it arXiv preprint arXiv:2601.10488,} (2026).}
	
	\bibitem{GG26} {\small \textsc{Gassot, L., G\'erard, P.} Infinite-order multisoliton solutions to the Benjamin--Ono equation and soliton resolution, {\it arXiv preprint arXiv:2603.15419,} (2026).}
	
	
	\bibitem{Ge23} {\small \textsc{G\'erard, P.} An explicit formula for the Benjamin-Ono equation, {\it Tunisian Journal of Mathematics,} {\bf 5} (2023), 593--603.}
	
	\bibitem{Ger26} {\small \textsc{G\'erard, P.} Lectures on integrable equations of Benjamin-Ono type, {\it EMS Surveys in Mathematical Sciences,} (2026).}
	
	\bibitem{GK21} {\small \textsc{G\'erard, P., Kappeler, T.} On the integrability of the Benjamin-Ono equation on the torus, {\it Communications on Pure and Applied Mathematics,} {\bf 74} (2021), 1685-1747.}
	
	\bibitem{GKT23} {\small \textsc{G\'erard, P., Kappeler, T., Topalov, P.} Sharp well-posedness results of the Benjamin-Ono equation in $H^s(\mathbb{T}, \mathbb{R})$ and qualitative properties of its solutions, {\it Acta Mathematica,} {\bf 231} (2023), 31-88.}
	
	\bibitem{GL24} {\small \textsc{G\'erard, P., Lenzmann, E.} The Calogero-Moser derivative nonlinear Schr\"odinger equation, {\it Communications on Pure and Applied Mathematics,} {\bf 77} (2024), 4008-4062.}
	
	\bibitem{GP24} {\small \textsc{G\'erard, P., Pushnitski, A.} The cubic Szeg\"o equation on the real line: explicit formula and well-posedness on the Hardy class, {\it Communications in Mathematical Physics,} {\bf 405} (2024), 167.}
	
	\bibitem{GT23} {\small \textsc{G\'erard, P., Topalov, P.} On the low regularity phase space of the Benjamin-Ono equation, {\it arXiv preprint arXiv:2308.07829,} (2023).}
	
	\bibitem{HM06} {\small \textsc{Helfrich, K. R., Melville, W. K.} Long nonlinear internal waves, {\it Annual Review of Fluid Mechanics,} {\bf 38} (2006), 395-425.}
	
	\bibitem{HG27} {\small \textsc{Hildebrandt, T. H., Graves, L. M.} Implicit functions and their differentials in general analysis, {\it Transactions of the American Mathematical Society,} {\bf 29} (1927), 127-153.}

	\bibitem{IT19} {\small \textsc{Ifrim, M., Tataru, D.} Well-posedness and dispersive decay of small data solutions for the Benjamin-Ono equation, {\it Annales Scientifiques de l'\'Ecole Normale Sup\'erieure,} {\bf 52} (2019), 297-335.}
	
	\bibitem{IK07} {\small \textsc{Ionescu, A. D., Kenig, C. E.} Global well-posedness of the Benjamin-Ono equation in low-regularity spaces, {\it Journal of the American Mathematical Society,} {\bf 20} (2007), 753-798.}
	
	\bibitem{JL23} {\small \textsc{Jendrej, J., Lawrie, A.} Soliton resolution for the energy-critical nonlinear wave equation in the radial case, {\it Annals of PDE,} {\bf 9} (2023).}
	
	\bibitem{JL25} {\small \textsc{Jendrej, J., Lawrie, A.} Soliton resolution for energy-critical wave maps in the equivariant case, {\it Journal of the American Mathematical Society,} {\bf 38} (2025), 783-875.}
	
	\bibitem{JLPS18} {\small \textsc{Jenkins, R., Liu, J., Perry, P., Sulem, C.} Soliton resolution for the derivative nonlinear Schr\"odinger equation, {\it Communications in Mathematical Physics,} {\bf 363} (2018), 1003-1049.}
	
	\bibitem{Kato66} {\small \textsc{Kato, T.} Perturbation theory for linear operators, Springer, Berlin, (1966).}
	
	\bibitem{KLM98} {\small \textsc{Kaup, D. J., Lakoba, T. I., Matsuno, Y.} Complete integrability of the Benjamin-Ono equation by means of action-angle variables, {\it Physics Letters A,} {\bf 238} (1998), 123-133.}

	
	\bibitem{KLV24} {\small \textsc{Killip, R., Laurens, T., Vi\c{s}an, M.} Sharp well-posedness for the Benjamin-Ono equation, {\it Inventiones Mathematicae,} {\bf 236} (2024), 999-1054.}
	
	\bibitem{KLV25} {\small \textsc{Killip, R., Laurens, T., Vi\c{s}an, M.} Scaling-critical well-posedness for continuum Calogero-Moser models on the line, {\it Communications of the American Mathematical Society,} {\bf 5} (2025), 284-320.}
	
	\bibitem{KK24} {\small \textsc{Kim, T., Kwon, S.} Soliton resolution for Calogero-Moser derivative nonlinear Schr\"odinger equation, {\it arXiv preprint arXiv:2408.12843,} (2024).}
	
	\bibitem{KS21} {\small \textsc{Klein, C., Saut, J.-C.} Nonlinear dispersive equations, {\it Springer,} Cham, (2021).}
	
	\bibitem{LTY22a} {\small \textsc{Li, Z. Q., Tian, S. F., Yang, J. J.} On the soliton resolution and the asymptotic stability of $N$-soliton solution for the Wadati-Konno-Ichikawa equation with finite density initial data in space-time solitonic regions, {\it Advances in Mathematics,} {\bf 409} (2022), 108639.}
	
	\bibitem{LTY22b} {\small \textsc{Li, Z. Q., Tian, S. F., Yang, J. J.} Soliton resolution for the Wadati-Konno-Ichikawa equation with weighted Sobolev initial data, {\it Annales Henri Poincar\'e,} {\bf 23} (2022), 2611-2655.}
	
	\bibitem{MX07} {\small \textsc{Miller, P. D., Xu, Z.} On the zero-dispersion limit of the benjamin-ono cauchy problem for positive initial data, {\it Communications on Pure and Applied Mathematics,} {\bf 64} (2011), 205-270.}
	
	\bibitem{M08} {\small \textsc{Molinet, L.} Global well-posedness in $L^2$ for the periodic Benjamin-Ono equation, {\it American Journal of Mathematics,} {\bf 130} (2008), 635-683.}
	
	\bibitem{MP12} {\small \textsc{Molinet, L., Pilod, D.} The Cauchy problem for the Benjamin-Ono equation in $L^2$ revisited, {\it Analysis \& PDE,} {\bf 5} (2012), 365-395.}
	
	\bibitem{N79} {\small \textsc{Nakamura, A.} B\"acklund transform and conservation laws of the Benjamin-Ono equation, {\it Journal of the Physical Society of Japan,} {\bf 47} (1979), 1335-1340.}
	
	\bibitem{Osb10} {\small \textsc{Osborne, A. R.} Nonlinear ocean wave and the inverse scattering transform, {\it International Geophysics Series, Volume 97,}  Academic Press, Burlington, (2010).}
	
	\bibitem{Pau24} {\small \textsc{Paulsen, M. O.} Justification of the Benjamin-Ono equation as an internal water waves model, {\it Annals of PDE,} {\bf 10} (2024), Paper No. 25.}
	
	\bibitem{Sau79} {\small \textsc{Saut, J.-C.} Sur quelques g\'en\'eralisations de l'\'equation de Korteweg-de Vries, {\it Journal de Math\'ematiques Pures et Appliqu\'ees,} {\bf 58} (1979), 21-61.}
	
	\bibitem{Ste93} {\small \textsc{Stein, E. M., Murphy, T. S.} Harmonic analysis: real-variable methods, orthogonality, and oscillatory integrals, {\it Princeton Mathematical Series, Volume 43,} Princeton University Press, Princeton, (1993).}
	
	\bibitem{Sun21} {\small \textsc{Sun, R.} Complete integrability of the Benjamin-Ono equation on the multi-soliton manifolds, {\it Communications in Mathematical Physics,} {\bf 383} (2021), 1051-1092.}
	
	\bibitem{T04} {\small \textsc{Tao, T.} Global well-posedness of the Benjamin-Ono equation in $H^1(\mathbb{R})$, {\it Journal of Hyperbolic Differential Equations,} {\bf 1} (2004), 27-49.}
	
	\bibitem{T06} {\small \textsc{Tao, T.} Nonlinear Dispersive Equations: Local and Global Analysis, {\it American Mathematical Society,} (2006).}
	
	\bibitem{T09} {\small \textsc{Tao, T.} Why are solitons stable?, {\it Bulletin of the American Mathematical Society,} {\bf 46} (2009), 1-33.}
	
	\bibitem{TT26} {\small \textsc{Tian, S. F., Tong, J. F.} On Cauchy problem for the spin-1 Gross-Pitaevskii equation: soliton resolution conjecture and asymptotic analysis, {\it Communications in Mathematical Physics,} {\bf 407} (2026), 52.}
	
	\bibitem{Wu16} {\small \textsc{Wu, Y. L.} Simplicity and finiteness of discrete spectrum of the Benjamin-Ono scattering operator, {\it SIAM Journal on Mathematical Analysis,} {\bf 48} (2016), 1348-1367.}

	
	\bibitem{YTL26} {\small \textsc{Yang, J. J., Li, Z. Q., Tian, S. F.} The modified Camassa-Holm equation with nonzero background: Soliton resolution conjecture and asymptotic stability of N-soliton solutions, {\it Advances in Mathematics,} {\bf 484} (2026), 110552.}
	
	\bibitem{ZK65} {\small \textsc{Zabusky, N. J., Kruskal, M. D.} Interaction of ``solitons'' in a collisionless plasma and the recurrence of initial states, {\it Physical Review Letters,} {\bf 15} (1965), 240-243.}
	
\end{thebibliography}
\end{document}